\def\cput(#1,#2)#3{\put(#1,#2){\hbox to 0pt{\hss{#3}\hss}}}
\def\lput(#1,#2)#3{\put(#1,#2){\hbox to 0pt{\hss{#3}}}}
\title[\resizebox{5.3in}{!}{Convergence of subdivision schemes on Riemannian manifolds with nonpositive sectional curvature}]{Convergence of subdivision schemes on Riemannian manifolds with nonpositive sectional curvature}
\author{Svenja H\"uning, Johannes Wallner}
\address{Svenja H\"uning (huening@tugraz.at) and
Johannes Wallner (j.wallner@tugraz.at). Institut f.\ Geometrie, TU Graz. Kopernikusgasse 24, 8010 Graz}
\theoremstyle{plain}
\newtheorem{thm}{Theorem}
\newtheorem{lem}[thm]{Lemma}
\newtheorem{prop}[thm]{Proposition}
\theoremstyle{definition}
\newtheorem{Def}[thm]{Definition}
\newtheorem{ex}[thm]{Example}
\theoremstyle{remark}
\def\av{\operatorname{av}}
\def\dist{\operatorname{dist}}
\def\grad{\operatorname{grad}}
\def \<{\langle}
\def \>{\rangle}
\def\N{{\mathbb N}}
\def\Z{{\mathbb Z}}
\def\R{{\mathbb R}}
\def\pds{\partial s}
\def\pdu{\partial u}
\def\Exende{\def\qedsymbol{$\diamondsuit$}\qed}
\def\CH{Car\-tan-\hskip0pt Ha\-da\-mard}
\begin{document}
\long\def\nix#1{}

\maketitle
\begin{abstract}
This paper studies well-defindness and convergence of subdivision schemes which operate on Riemannian manifolds with nonpositive sectional curvature. These schemes are constructed from linear ones by replacing affine averages by the Riemannian center of mass. In contrast to previous work, we consider schemes without any sign restriction on the mask, and our results apply to \textit{all} input data.
We also analyse the H\"older continuity of the resulting limit curves.
Our main result states that convergence is implied by contractivity of a derived scheme, resp.\ iterated derived scheme. In this way we establish that convergence of a linear subdivision scheme is almost equivalent to convergence of its nonlinear manifold counterpart.  
\end{abstract}


\section{Introduction}

Linear stationary subdivision schemes are well-studied regarding their properties of convergence and smoothness, see for example \cite{cavaretta}. Over the last years, linear refinement rules were transferred to nonlinear geometries, and subdivision algorithms have been applied to data coming from surfaces, Lie groups or Riemannian manifolds. Different methods have been introduced to extend linear refinement algorithms to manifold-valued data. Examples are the log-exp-analogue of a linear scheme \cite{donoho,rahman}, geodesic averaging processes or the so-called projection analogue, see \cite{grohs} for an overview. 

Many results on convergence of nonlinear refinement processes are based on the so-called proximity conditions introduced in \cite{wallnerdyn}. These convergence results unfortunately only apply to `dense enough' input data.
 
If convergence is assumed, many nonlinear constructions yield \(C^1\) and \(C^2\) smoothness, see e.g.\ \cite{wallner3, grohs3, wallner4}. The full smoothness of linear schemes is reproduced only if certain ways of constructing nonlinear schemes from linear ones are employed \cite{xie2,grohs}.

Returning to the question of convergence of nonlinear subdivision schemes, some results apply to \textit{all} input data. One can show convergence e.g.\
for interpolatory schemes in Riemannian manifolds
\cite{wallner2} or schemes defined by binary
geodesic averaging \cite{dyn4, dyn3}. If one restricts to special
geometries, more general classes of schemes can be shown to converge
for all input data, e.g.\ schemes with  nonnegative mask in
\CH\ metric spaces have been treated by
\cite{ebner1, ebner2}. In this general setting,
which goes beyond smooth manifolds,
the coefficients of the scheme's mask are interpreted as probabilities. 

In this paper we prove convergence of subdivision schemes in complete Riemannian manifolds with sectional curvature \(K \leqslant 0\). Our results are valid for all input data and for schemes with arbitrary mask. We generalise earlier work, in particular Theorem 5 of \cite{wallner} which can only be applied to schemes with nonnegative mask. To extend linear refinement rules to manifold-valued data we use the Riemannian center of mass \cite{karcher}. Such refinement rules have been investigated by \cite{grohs} regarding their smoothness; and in \cite{wallner} with regard to convergence. A synonym for \lq Riemannian center of mass\rq~ which has been used is \textit{weighted geodesic averaging}.

The paper is organized as follows. First we recall some facts about linear subdivision schemes and their nonlinear counterparts. In particular, we introduce a Riemannian analogue \(T\) of a linear scheme \(S\) and show that it is well-defined in \CH\ manifolds. In Section 4 we prove that \(T\) is contractive and displacement-safe, in the terminology introduced in \citep{dyn3}. Afterwards we deduce our main result which states that if
	\begin{align*}
	\frac1{N^m}\Vert S^{m*}\Vert <1, \qquad \text{for some} \ m=1,2,\ldots,
	\end{align*}
then \(T\) converges to a continuous limit curve. Here \(N\) denotes the dilation factor and \(S^*\) is the derived scheme. Next, we analyse the H\"older regularity of the limit curves. Moreover we describe how to extend our results to a wider class of manifolds by dropping the simple connectivity required for \CH\ manifolds. The last section presents some examples.

\section{Subdivision schemes}
\subsection{Linear subdivision schemes}
A linear subdivision scheme \(S\) maps a sequence of points \((x_i)_{i\in \Z }\) lying in a linear space to a new sequence of points \((Sx_i)_{i\in \Z }\) using the rule
	\begin{align*}
	Sx_i=\sum_{j\in \Z } a_{i-Nj}x_j.
	\end{align*}
Here \(N \in \N \) is the \textit{dilation factor}. We require \(N \geqslant 2\), but the usual case is \(N=2\). Throughout the paper we assume that the sequence \( a_{\ell}\), \(\ell \in \Z \), called the \textit{mask} of the refinement rule, has compact support. This means that \(a_{\ell} \neq 0\) only for finitely many \(\ell\). 
It turns out that the condition 
	\begin{align} \label{affine_invariance}
	\sum_{j\in \Z } a_{i-Nj}=1\ \   \text{for all $i$}
	\end{align}
(\textit{affine invariance}) is necessary for the convergence of linear subdivision schemes, see \cite{dyn3} and \cite{cavaretta} for an overview. From now on, we make the assumption that all subdivision schemes are affine invariant. 

To simplify notation, we initially consider only \textit{binary} refinement rules, i.e., rules with dilation factor \(N=2\). Then we can write the refinement rule in the following way:
	\begin{align} \label{linearrule}
	(Sx)_{2i}
	=\sum_{j=-m}^{m+1}\alpha_j x_{i+j} 
	\hspace{0.5cm} \text{and} \hspace{0.5cm}
	(Sx)_{2i+1}=\sum_{j=-m}^{m+1}\beta_j x_{i+j},
	\end{align}
with \(m\in \N \) and coefficients \(\alpha_j,\beta_j\) such that \begin{align} \label{sumcondition1}
\sum_{j=-m}^{m+1}\alpha_j=\sum_{j=-m}^{m+1}\beta_j=1.
\end{align}
For example Chaikin's algorithm \cite{chaikin}, which is given by the mask
\((a_{-2},\ldots,a_1)$ $=$ $(\frac1{4},\frac 34,\frac 34,\frac 14)\), can be written as 
\begin{align} \label{Chaikin_rule}
	(Sx)_{2i}=\frac{3}{4}x_{i}+\frac1{4}x_{i+1} 
	\hspace{0.5cm} \text{and} \hspace{0.5cm} 
	(Sx)_{2i+1}=\frac1{4}x_{i}+\frac{3}{4}x_{i+1}.
\end{align}
Subdivision schemes satisfying \((Sx)_{2i}=x_{i}\) are called \textit{interpolatory}. For example the well-known four-point scheme is defined by
\begin{align}\label{four-point}
	(Sx)_{2i}=x_i 
	\hspace{0.5cm} \text{and} \hspace{0.5cm} 
	(Sx)_{2i+1}=-\omega x_{i-1}
		+\Big(\frac1{2} +\omega\Big) x_i
		+\Big(\frac1{2}+\omega\Big) x_{i+1}-\omega x_{i+2},
	\end{align}
for some parameter \(\omega\), see \cite{dyn1}. 
The next example will be our main example throughout the text.

\begin{ex} \label{Chaikin_fourpoint}
We consider a non-interpolatory subdivision scheme with negative mask coefficients.
Taking averages of the four-point scheme with parameter \(\omega=\frac1{16}\) and Chaikin's scheme yields 
\setlength{\belowdisplayskip}{0pt}%
\setlength{\belowdisplayshortskip}{0pt}%
\begin{align*} 
	(Sx)_{2i}
	&=-\frac1{32}x_{i-1}
	+\frac{21}{32}x_i
	+\frac{13}{32}x_{i+1}
	-\frac1{32}x_{i+2}, 
	\\
	(Sx)_{2i+1}
	&=-\frac1{32}x_{i-1}
	+\frac{13}{32}x_i
	+\frac{21}{32}x_{i+1}
	-\frac1{32}x_{i+2}.
\end{align*} 
\Exende
\end{ex}

\subsection{The Riemannian analogue of a linear subdivision scheme} \label{Riemannian analogue}

We recall the extension of a linear subdivision scheme to manifold-valued data with the help of the Riemannian center of mass as shown in \cite{grohs}. This generalisation of the concept of affine average is quite natural in the sense that we only replace the Euclidean distance by the Riemannian distance. The construction requires to introduce some notation. We denote the Riemannian inner product by \(\< \cdot,\cdot\>\). The Riemannian distance \(\dist (x,y)\) between two points  \(x,y \in M\) is given by
	\begin{align*}
	\dist (x,y):=\inf\limits_{\gamma} 
	\int_a^b \left|\dot\gamma(t)\right|\, dt,
	\end{align*}  
where \(\gamma:[a ,b]\to M\) is a curve connecting points \(\gamma(a)=x\) and \(\gamma(b)=y\).
Consider the weighted affine average 
\begin{align*}
	x^*=\sum_{j=0}^n \alpha_j x_j
	\end{align*}
 of points \(x_j \in \R ^d\) w.r.t.\ weights \(\alpha_j \in \R \), satisfying \(\sum \alpha_j=1\). It can be characterised as the unique minimum of the function 
	\begin{align*}
	g_{\alpha}(x)=\sum_{j=0}^n \alpha_j \left| x-x_j\right|^2.
	\end{align*}
We transfer this definition to Riemannian manifolds by replacing the Euclidean distance by the Riemannian distance. Let
	\begin{align*}
	f_{\alpha}(x)=\sum_{j=0}^n \alpha_j\dist ( x,x_j)^2.
	\end{align*}
We call the minimizer of this function the \textit{Riemannian center of mass} and denote it by 
	\begin{align*}
	x^*=\av(\alpha,x).
	\end{align*}
Note that in general the Riemannian center of mass is only well-defined locally. It is the aim of the present paper to identify settings where the average is globally well-defined.
We extend the linear subdivision rule (\ref{linearrule}) to manifold-valued data by defining
	\begin{align} \label{i-mean}
	(Tx)_{2i}=\av(\alpha,x) \hspace{0.5cm} \text{and} \hspace{0.5cm}
	(Tx)_{2i+1}=\av(\beta,x).
\end{align}

\begin{Def}
We call \(T\) the \textit{Riemannian analogue} of the linear subdivision scheme \(S\).
\end{Def}

\section{The Riemannian center of mass in \CH\ manifolds}
\label{Rcom}

\noindent
\CH\ manifolds, and more generally manifolds with nonpositive sectional curvature, are a class of geometries where the Riemannian average can be made globally well-defined. Let \(M\) be a \CH\ manifold, i.e., a simply connected, complete Riemannian manifold with sectional curvature \(K \leqslant 0\). To show well-definedness of geodesic averages we have to clarify the global existence and uniqueness of a minimizer of the function 
	\begin{align} \label{minimizationfunction}
f_{\alpha}(x)=\sum_{j=-m}^{m+1} \alpha_j \dist (x_j,x)^2, 
	\quad\text{with}\quad
	\sum_j \alpha_j =1
\end{align}
and \(x_j \in M\). 
A local answer to this question is not difficult, see for example  \cite{sander}. The global well-definedness in case \(\alpha_j \geqslant 0\) is shown in \cite{kobayashi}. 
Hanne Hardering gave another proof of the global existence in \cite{hardering}. We are mainly interested in the result she gave in Lemma \(2.3.\) of \cite{hardering} which we formulate as

\begin{lem} [H. Hardering, \cite{hardering}] \label{existence}
The function \(f_{\alpha}\) has at least one minimum. Moreover, there exists \(r>0\) (depending on the coefficients \(\alpha_j\) and the distances of the points \(x_j\) from each other) such that all minima of \(f_{\alpha}\) lie inside the compact ball \(\overline{B_r (x_0)}\). 
\end{lem}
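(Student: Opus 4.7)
The plan is to combine two ingredients: coercivity of $f_\alpha$, obtained by a triangle-inequality estimate that does not rely on $\alpha_j \geqslant 0$; and the Hopf-Rinow theorem, which makes closed balls in $M$ compact.

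To set up the coercivity estimate, I fix $x_0$ as the reference point (one may take it to be one of the data points) and write $R := \dist(x, x_0)$ and $D_j := \dist(x_0, x_j)$. The triangle inequality $R - D_j \leqslant \dist(x, x_j) \leqslant R + D_j$ squares to
\[
\bigl|\dist(x, x_j)^2 - R^2 - D_j^2\bigr| \leqslant 2 R D_j.
\]
Multiplying by $\alpha_j$, summing over $j$, and invoking the affine-invariance condition $\sum_j \alpha_j = 1$, the leading $R^2$ contributions combine to a single $R^2$, so
\[
f_\alpha(x) \geqslant R^2 - 2R \sum_j |\alpha_j|\, D_j + \sum_j \alpha_j D_j^2.
\]
The right-hand side is a quadratic in $R$ with positive leading coefficient, so $f_\alpha(x) \to \infty$ as $R \to \infty$ regardless of the signs of the $\alpha_j$. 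A short calculation shows that $f_\alpha(x) > f_\alpha(x_0)$ whenever $R > 2 \sum_j |\alpha_j| D_j =: r$, and $r$ depends only on the coefficients and on the mutual distances of the $x_j$.

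For existence, observe that the closed ball $\overline{B_r(x_0)}$ is compact by Hopf-Rinow (using only completeness of $M$). Since $f_\alpha$ is continuous, it attains its infimum on $\overline{B_r(x_0)}$ at some $x^*$. For any $x$ outside this ball the previous step gives $f_\alpha(x) > f_\alpha(x_0) \geqslant f_\alpha(x^*)$, so $x^*$ is a global minimum and every minimum lies inside $\overline{B_r(x_0)}$.

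I do not expect a serious obstacle here. The delicate point, which one might initially fear to be the hard one, is that with possibly negative $\alpha_j$ the function $f_\alpha$ need not be convex, so the usual argument via strict convexity of $\dist(\cdot, x_j)^2$ in a \CH\ manifold is unavailable. The estimate above sidesteps this because $\sum_j \alpha_j = 1$ alone is enough to force the correct quadratic asymptotic behaviour. Notice that the nonpositive curvature hypothesis plays no role in this existence and localization statement; it will enter only later, when one seeks uniqueness of the minimizer.
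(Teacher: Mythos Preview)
The paper does not give its own proof of this lemma; it is quoted verbatim as Lemma~2.3 of Hardering's thesis \cite{hardering} and used as a black box. Your argument is a correct, self-contained proof: the triangle-inequality estimate together with $\sum_j\alpha_j=1$ yields
\[
f_\alpha(x)-f_\alpha(x_0)\ \geqslant\ R^2-2R\sum_j|\alpha_j|\,D_j \ =\ R\,(R-r),
\]
so every minimizer lies in $\overline{B_r(x_0)}$, and Hopf--Rinow (completeness alone) makes this ball compact, giving existence. Your remark that nonpositive curvature is not used here is accurate; in the paper the hypothesis $K\leqslant 0$ enters only in the next lemma, where strict convexity of $f_\alpha$ along geodesics is established to obtain uniqueness. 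One small quibble with your closing comment: it is not that convexity is \emph{unavailable} when some $\alpha_j<0$ --- the paper in fact proves strict convexity in that generality --- but rather that your coercivity argument does not need it, which is exactly the division of labour the paper intends.
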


To prove that the function \(f_{\alpha}\) has a unique minimum we generalise a statement of Hermann Karcher \cite{karcher}. It turns out that we can use arguments similar to his by splitting \(\sum_{j=-m}^{m+1} \alpha_j \dist (x_j,x)^2\) into two sums depending on whether the corresponding coefficient is negative or not. 
Before we introduce the general notation used throughout the text, we illustrate the idea by means of Example \ref{Chaikin_fourpoint}. 
\begin{ex} \label{Chaikin_fourpoint2}
Consider the subdivision rule defined by the coefficients \(\alpha_j\) and \(\beta_j\) of Example \ref{Chaikin_fourpoint} and define \(f_{\alpha}\) according to (\ref{minimizationfunction}) by
	\begin{align*}
	f_{\alpha}(x)=\sum_{j=-1}^2 \alpha_j \dist (x_j,x)^2,
	\end{align*}
with
\((\alpha_{-1},\ldots,\alpha_2)=(-{1\over 32},{21\over 32},
{13\over 32},-{1\over 32})\).
We sort these coefficients in two groups depending on whether they are positive or not. 

It is convenient to define 
	\(
	\alpha_+=\frac{21}{32}+\frac{13}{32}
	=\frac{34}{32}\) 
and 
	\(\alpha_-
	=  \left|-\frac1{32}\right|+\left| -\frac1{32}\right|
	=\frac{2}{32}\). 
 We split the interval 
	\([0,\alpha_{+}+\alpha_{-}]\) 
 in four subintervals whose length coincides with the values 
	\(|\alpha_j|\) 
(but in a different order). We define the function 
	\(\sigma: [ 0,\alpha_{+}+\alpha_{-} ]\to \{-1,0,1,2 \}\) by 
	\begin{align*}
	\sigma(t)=
	\left\{\begin{array}{@{}rl}
	-1
	& \text{for}~ t\in \left[ 0, \frac1{32}\right]\\[1ex]
	2
	& \text{for}~ t\in \left( \frac1{32},\frac{2}{32} \right]\\[1ex]
	0
	& \text{for}~ t\in \left( \frac{2}{32},\frac{23}{32}\right]\\[1ex]
	1
	& \text{for}~ t\in \left( \frac{23}{32},\frac{36}{32}\right]
	\end{array}\right.
	\end{align*}
 and see that
\setlength{\belowdisplayskip}{0pt}%
\setlength{\belowdisplayshortskip}{0pt}%
\begin{align*}
	f_{\alpha}(x)
	&=\sum_{j=-1}^{2}\alpha_{j} \dist (x_{j},x)^2 
	=-\int\limits_0^{\alpha_{-}} 
		\dist (x_{\sigma(t)},x)^2 dt
	+\int\limits_{\alpha_{-}}^{\alpha_{-}+\alpha_{+}} 
		\dist (x_{\sigma(t)},x)^2 dt.
	\qedhere
	\end{align*} \Exende
\end{ex}

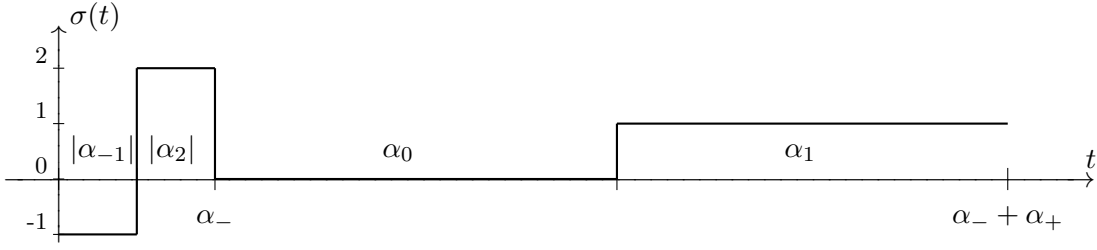
\begin{figure}[t]
	\unitlength0.01\textwidth
	{\begin{picture}(100,20)
	\put(0,4.3){\hbox to 98\unitlength{\rightarrowfill}}
	\put(4.70,-1){\rotatebox{90}{\hbox to 20\unitlength{\rightarrowfill}}}
	\put(5,4){	
		\put(1,15){$\sigma(t)$}
		\put(1,3){$|\alpha_{-1}|$}
		\put(0,0){\line(0,1){2}}
		\thicklines\put(0,-4){\line(1,0){7}}
	}
	\put(12,4){	
		\put(1,3){$|\alpha_{2}|$}
		\put(0,0){\line(0,1){2}}
		\thicklines\put(0,11){\line(1,0){7}}
		\put(0,11){\line(0,-1){15}}
	}
	\put(19,4){	
		\put(15,3){$\alpha_{0}$}
		\put(0,0){\line(0,1){2}}
		\cput(0,-3){$\alpha_-$}
		\thicklines\put(0,1){\line(1,0){36}}
		\put(0,1){\line(0,1){10}}
	}
	\put(55,4){	
		\put(15,3){$\alpha_{1}$}
		\put(0,0){\line(0,1){2}}
		\thicklines\put(0,6){\line(1,0){35}}
		\put(0,6){\line(0,-1){5}}
	}
	\put(90,4){	
		\put(0,0){\line(0,1){2}}
		\cput(0,-3){$\alpha_-+\alpha_+$}	
		\put(7,2){$t$}
	}
	\multiput(4.5,0)(0,5){4}{\line(1,0){1}}
	\put(4,5.5){\footnotesize
		\lput(0,-5){-1}
		\lput(0,0){0}
		\lput(0,5){1}
		\lput(0,10){2}}
	\end{picture}}
	\caption{Construction of the index selection
	function $\sigma$ on basis
		of the sequence $(\alpha_j)_{j=-1}^{2}$ with
	$\alpha_{-1},\alpha_2<0$, $\alpha_0,\alpha_1>0$.}
\end{figure}

In the general case, we need the following
notation to eventually rewrite the function
in (\ref{minimizationfunction}) as the sum of two integrals. We begin to sort our coefficients in two groups by defining two index sets  
	\begin{align*}
	I_{-}^{\alpha}:=\{ j \mid \alpha_{j} < 0 \}, 
	\quad
	I_{+}^{\alpha}:=\{ j \mid \alpha_{j} \geqslant 0 \}.
	\end{align*}
We describe these sets as
	\begin{align*}
	I_{-}^{\alpha}&=\{ j_1,\ldots,j_{n}\}, 
	\quad
	I_{+}^{\alpha}=\{ j_{n+1},\ldots,j_{2m+2}\},  
	\end{align*}
with \(j_1 < \ldots < j_{n}\) and \(j_{n+1} <\ldots<j_{2m+2}\) for
\(n \in \{ 1,\ldots,2m+2\} \) and \(j_{i} \in \{ -m,\ldots,m+1\}\). 
If \(I_{-}^{\alpha}=\emptyset\), we set \(n=0\) and
\(I_{+}^{\alpha}=\{ -m,\ldots,m+1\}\).
Let
\begin{align*}
	\alpha_{+}=\sum_{j \in I_{+}^{\alpha}} \alpha_{j}, \quad
	\alpha_{-}=\sum_{j \in I_{-}^{\alpha}} | \alpha_{j} |, \quad
	\beta_{+}=\sum_{j \in I_{+}^{\beta}} \beta_{j},  \quad
	\beta_{-}=\sum_{j \in I_{-}^{\beta}} | \beta_{j} |.
\end{align*} 
 Assumption (\ref{sumcondition1}) implies that 
	\begin{align} \label{sumcondition2}
	\alpha_{+}-\alpha_{-}=\beta_{+}-\beta_{-}=1.
	\end{align} 
We are now able to rewrite the function \(f_{\alpha}\) with the help of two integrals 
\begin{align} \label{functionasintegral}
	f_{\alpha}(x)&=\sum_{j=-m}^{m+1} 
	\alpha_{j}\ \dist (x_{j},x)^2 
	=
	\Big(-\int_0^{\alpha_{-}} 
	+\int_{\alpha_{-}}^{\alpha_{-}+\alpha_{+}} \Big)
		\dist \left(x_{\sigma(t)},x\right)^2 dt
	\end{align}
with the function \(\sigma:\left[0,\alpha_{+}+\alpha_{-}\right]\to \{ -m,\ldots,m+1 \}\) given as follows. It is constant in each of the successive intervals of length \(| \alpha_{j_1}|,| \alpha_{j_2}|,\dots,| \alpha_{j_{2m+2}}|\) which tile the interval \(\left[0,\alpha_{+}+\alpha_{-}\right]\). Its value in the \(k\)-th interval is given by the integer \(j_{k}\). The values at subinterval boundaries are not relevant.
We note that the first part of the definition of \(\sigma\) represents the summands of (\ref{minimizationfunction}) corresponding to coefficients of \(I^{\alpha}_{-}\) whereas the second part represents the coefficients corresponding to \(I^{\alpha}_{+}\).

Using the representation of the function \(f_{\alpha}\) given in (\ref{functionasintegral}) we can state 

\begin{lem} \label{gradientconvexity}
In a \CH\ manifold the gradient of the function \(f_{\alpha}\) is given by the formula
\begin{align*}
	\frac1{2}\grad f_{\alpha}(x)
	=\int_0^{\alpha_{-}} 
		\exp_x^{-1}x_{\sigma(t)} dt
	-\int_{\alpha_{-}}^{\alpha_{-}+\alpha_{+}} 
		\exp_x^{-1}x_{\sigma(t)} dt,
	\end{align*}
where \(\exp\) denotes the Riemannian exponential map.
Furthermore, we have
	\begin{align*}
	\frac{d^2}{ds^2}f_{\alpha}(\gamma(s))\geqslant \< \dot \gamma (s),
	 \dot \gamma (s) \>
	\end{align*}
for any geodesic \(\gamma:[0,1] \to M\).
\end{lem}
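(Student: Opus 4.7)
First I would handle the gradient formula, which is essentially bookkeeping once one has the single-term identity $\grad_x\bigl(\tfrac12\dist(y,x)^2\bigr)=-\exp_x^{-1}y$. This is standard Riemannian calculus, following from the first variation of arc length, and it is valid globally on a \CH\ manifold because $\exp_x\colon T_xM\to M$ is a diffeomorphism, so $\exp_x^{-1}y$ always makes sense. Differentiating $f_\alpha$ term by term gives $\tfrac12\grad f_\alpha(x)=-\sum_j\alpha_j\exp_x^{-1}x_j$, and the identity asserted in the lemma then drops out by splitting the sum according to the sign of $\alpha_j$ and recognising each part as the Riemann integral of the step function $t\mapsto\exp_x^{-1}x_{\sigma(t)}$ on its respective subinterval.

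For the Hessian inequality the main analytic tool is the Hessian comparison theorem on \CH\ manifolds, which gives the pointwise bound $\frac{d^2}{ds^2}\dist(y,\gamma(s))^2\geqslant 2\langle\dot\gamma,\dot\gamma\rangle$ for every $y\in M$ and every geodesic $\gamma$. Writing $\frac{d^2}{ds^2}\dist(x_{\sigma(t)},\gamma(s))^2 = 2\langle\dot\gamma,\dot\gamma\rangle + E_t(s)$ with $E_t(s)\geqslant 0$, differentiating the integral representation (\ref{functionasintegral}) twice under the integral sign, and using $\alpha_+-\alpha_-=1$ from (\ref{sumcondition2}), I obtain
\begin{align*}
\frac{d^2}{ds^2}f_\alpha(\gamma(s)) = 2\langle\dot\gamma,\dot\gamma\rangle + \int_{\alpha_-}^{\alpha_-+\alpha_+}E_t(s)\,dt - \int_0^{\alpha_-}E_t(s)\,dt,
\end{align*}
so the inequality reduces to showing that this signed integral of the curvature surplus is bounded below by $-\langle\dot\gamma,\dot\gamma\rangle$.

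This is where the real work lies. On a \CH\ manifold $E_t(s)$ has no uniform upper bound, since it grows with $\dist(x_{\sigma(t)},\gamma(s))$ in the directions transverse to the radial geodesic, and a termwise application of Hessian comparison to the negative integrand produces a bound in the wrong direction. My plan is to pair each infinitesimal negative contribution at parameter $t\in[0,\alpha_-]$ with the positive contribution at the translated parameter $\alpha_-+t$, which leaves an unpaired positive interval of total length $\alpha_+-\alpha_-=1$ on which Hessian comparison already delivers at least $2\langle\dot\gamma,\dot\gamma\rangle$. The paired difference $E_{\sigma(\alpha_-+t)}-E_{\sigma(t)}$ must then be controlled by a second-variation or Jacobi-field estimate along the geodesic joining $x_{\sigma(t)}$ and $x_{\sigma(\alpha_-+t)}$, using nonpositive sectional curvature. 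This pairing-and-comparison step is the genuinely new ingredient beyond Karcher's original positive-weight argument, and I expect it to be the most delicate part of the proof.
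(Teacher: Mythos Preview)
Your treatment of the gradient formula is correct and matches the paper's: both reduce to the termwise identity $\grad_x\tfrac12\dist(y,x)^2=-\exp_x^{-1}y$ followed by bookkeeping with the step function~$\sigma$.

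For the second-derivative inequality you have correctly located a genuine difficulty that the paper's sketch does not in fact resolve. The paper writes the signed integral $\bigl(-\int_0^{\alpha_-}+\int_{\alpha_-}^{\alpha_-+\alpha_+}\bigr)\langle J(1),J'(1)\rangle\,dt$ and then, invoking $\alpha_+-\alpha_-=1$, collapses it to the single expression $\langle J(1),J'(1)\rangle$ before applying the Jacobi-field estimate. That step would require $\langle J(1),J'(1)\rangle$ to be independent of $t$; while $J(1)=\dot\gamma(s)$ is, $J'(1)$ depends on the base geodesic from $x_{\sigma(t)}$ and hence on $t$, so this collapse is not justified as written. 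Your pairing idea is a more honest attempt to control the negative contributions, but it cannot close the gap either, because the inequality claimed in the lemma is false in general. In the hyperbolic plane take weights $(\alpha_1,\alpha_2)=(-10,11)$ (so $\sum\alpha_j=1$) and points $x_1,x_2$ at distance $2$; at $x=x_2$ with $\dot\gamma$ unit and perpendicular to the segment $x_1x_2$, the perpendicular Hessians of $\dist(x_j,\cdot)^2$ are $2\cdot 2\coth 2$ and $2$ respectively, giving
\[
\frac{d^2}{ds^2}f_\alpha(\gamma(s))\Big|_{s=0}=-10\cdot 4\coth 2+11\cdot 2=22-40\coth 2\approx -19.5<0<1=\langle\dot\gamma,\dot\gamma\rangle.
\]
Thus no pairing or Jacobi-field comparison can yield the stated global bound; the ``delicate'' estimate you anticipate will not materialise, and the assertion needs additional hypotheses (for instance, restriction to a neighbourhood of the minimiser or a bound on $\alpha_-$) to be true.
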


We discuss the proof of this lemma only briefly because the structure and the main ideas are similar to those used in the proof of Theorem \(1.2.\) in \cite{karcher}. 

\begin{proof}
Let \( \gamma: [0,1] \to M\) be a geodesic and denote by 
	\[c_{t}(u,s)
	=\exp_{x_{\sigma(t)}}
		\big(u\cdot \exp_{x_{\sigma(t)}}^{-1}\gamma(s)\big)\] 
the geodesic connecting 
	\(x_{\sigma(t)}\) with \(\gamma(s)\). 
Those geodesics exist and are unique since \(M\) is \CH. Additionally, let 
	\(c'_{t}(u,s):=\frac{d}{du}c_{t}(u,s)\) and 
	\(\dot c_{t}(u,s):=\frac{d}{ds}c_{t}(u,s)\). By 
constrution,
	\( \dist \left(x_{\sigma(t)},\gamma(s)\right)=\|c'_{t}(u,s)\|\). 
 For each \(t,s\) the vector field 
	\(J(u)=\dot c_{t}(u,s)\) 
 along the geodesic 
	\(u\mapsto c_{t}(u,s)\) 
is a Jacobi field. We obtain 
\begin{align*}
	\frac1{2}\frac{d}{ds}f_{\alpha}(\gamma(s))
	&=
	\Big(-\int_0^{\alpha_{-}} 
	+\int_{\alpha_{-}}^{\alpha_{-}+\alpha_{+}} \Big)
		\Big\< \frac{\nabla}{\pds}c'_{t}(u,s),
		\ c'_{t}(u,s)\Big\>\ dt
	\\
	&=
	\Big(-\int_0^{\alpha_{-}} 
		+\int_{\alpha_{-}}^{\alpha_{-}+\alpha_{+}} \Big)
		\Big\< \dot c_{t}(1,s),\ c'_{t}(1,s)\Big\>\ dt,
\end{align*}
where \(\frac{\nabla}{\pds}\) denotes the covariant derivative along the curve \(\gamma(s)\). Observe that \newline \(c'_{t}(1,s)=-\exp^{-1}_{\gamma(s)}x_{\sigma(t)}\) and \(\dot c_{t}(1,s)=\dot \gamma(s)\) is independent of \(t\). We obtain
\begin{align*}
	\frac1{2}\frac{d}{ds}f_{\alpha}(\gamma(s))&
	=\Big\< \dot \gamma(s),\
	\Big(\int_0^{\alpha_{-}} -\int_{\alpha_{-}}^{\alpha_{-}+\alpha_{+}}\Big)
		\exp^{-1}_{\gamma(s)}x_{\sigma(t)} dt
	\Big\>  
\end{align*}
and therefore
\begin{align*}
	\frac1{2} \grad  f_{\alpha}(x)
	=
		\Big(\int_0^{\alpha_{-}} 
		-\int_{\alpha_{-}}^{\alpha_{-}+\alpha_{+}}\Big)
		\exp_x^{-1}x_{\sigma(t)} dt.
\end{align*}
Using (\ref{sumcondition2}) we see that
\begin{align*}
\frac1{2}~\frac{d^2}{ds^2}f_{\alpha}(\gamma(s))&
	=
	\Big(-\int_0^{\alpha_{-}} 
		+\int_{\alpha_{-}}^{\alpha_{-}+\alpha_{+}} \Big)
	\Big\< \dot c_{t}(1,s),\
		 \frac{\nabla}{du}\dot c_{t}(1,s)\Big\> ~dt
	\\
&= 
	\Big(-\int_0^{\alpha_{-}} 
		+\int_{\alpha_{-}}^{\alpha_{-}+\alpha_{+}} \Big)
	\< J(1),J'(1)\big\> ~dt
	\\
	&= \< J(1),J'(1) \> 
	\geqslant \frac1{2} \< \dot \gamma (s) , \dot \gamma (s) \>.
\end{align*} To obtain the inequality above we used the following relations between the Jacobi field and its derivative
\begin{align} \label{inequalitysectionalcurvature}
J'(1)^{\text{tan}}=J(1)^{\text{tan}}\hspace{0.5cm}
\text{and}
\hspace{0.5cm}
\< J' (1)^{\text{norm}},J(1) \> \geqslant \< J (1)^{\text{norm}},J(1) \>,
\end{align}
where \(J^{\text{tan}}\) (resp.\ \(J^{\text{norm}}\)) denotes the tangential (resp.\ normal) part of the Jacobi field; see Appendix A in \cite{karcher} for more details. Here we used the fact that the sectional curvature of \(M\) is bounded above by zero. 
\end{proof}

We sum up the results of the two lemmas above to state the main result of this section.

\begin{thm} \label{intrinsicmean}
In a \CH\ manifold \(M\), the function 
\begin{align*}
f_{\alpha}(x)=\sum_{j=-m}^{m+1} \alpha_{j}\dist (x_{j},x)^2 
\end{align*}
\((\sum \alpha_{j}=1)\) with \(x_{j} \in M\) has a unique minimum. This implies that the geodesic average is globally well-defined in \CH\ manifolds.
\end{thm}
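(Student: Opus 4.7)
The plan is to combine the two lemmas: Lemma \ref{existence} already delivers existence of a minimum (and confines all minima to a compact ball), so the only remaining task is uniqueness. For this, the second-derivative bound
\begin{align*}
\frac{d^2}{ds^2} f_\alpha(\gamma(s)) \geqslant \langle \dot\gamma(s), \dot\gamma(s)\rangle
\end{align*}
from Lemma \ref{gradientconvexity} is the central tool: it says $f_\alpha$ is strictly convex along every non-constant geodesic.

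The argument I would run is the standard convexity argument for uniqueness. Suppose $x_1$ and $x_2$ are both minima of $f_\alpha$, with common value $m := f_\alpha(x_1) = f_\alpha(x_2)$. Because $M$ is \CH, the Cartan-Hadamard theorem guarantees that $x_1$ and $x_2$ are joined by a unique minimizing geodesic $\gamma:[0,1]\to M$ with $\gamma(0)=x_1$ and $\gamma(1)=x_2$, and that $|\dot\gamma(s)|$ is constant and equals $\dist(x_1,x_2)$. Set $h(s) := f_\alpha(\gamma(s))$. By Lemma \ref{gradientconvexity}, $h''(s) \geqslant \dist(x_1,x_2)^2$ for $s\in[0,1]$. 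If $x_1\neq x_2$, then $h$ is strictly convex on $[0,1]$, so for every $s\in(0,1)$ we would have $h(s) < (1-s)h(0)+sh(1) = m$, contradicting minimality of $m$. Hence $x_1 = x_2$, and the minimum is unique.

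Combining this with Lemma \ref{existence} yields a unique minimizer of $f_\alpha$, which is by definition $\av(\alpha, x)$. In particular, the Riemannian center of mass (and hence the Riemannian analogue $T$ of the scheme $S$ introduced in (\ref{i-mean})) is globally well-defined on any \CH\ manifold, regardless of the signs or magnitudes of the coefficients $\alpha_j$.

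I do not expect a genuine obstacle here, since the heavy lifting is done by Lemma \ref{gradientconvexity}; the only subtlety to flag is that the convexity estimate was proved using the assumption $K\leqslant 0$ (through the Jacobi-field inequalities in (\ref{inequalitysectionalcurvature})) and the simple connectedness of $M$ (needed so that $\exp_x^{-1}x_{\sigma(t)}$ makes unambiguous sense and that a unique geodesic joins any two points). Both ingredients are built into the \CH\ hypothesis, so the argument goes through without additional assumptions on the coefficients~$\alpha_j$.
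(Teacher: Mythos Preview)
Your argument is correct and follows exactly the route taken in the paper: existence via Lemma~\ref{existence}, uniqueness via the strict geodesic convexity from Lemma~\ref{gradientconvexity}. The paper merely states that strict convexity implies uniqueness, whereas you spell out the standard two-minimum contradiction; the content is the same.
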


\begin{proof}
By Lemma \ref{existence} there exists a minimum of the function \(f_{\alpha}\) and all its minima are inside a compact ball. By the second part of Lemma \ref{gradientconvexity} the function \(f_{\alpha}\) is strictly convex, so the minimum is unique.
\end{proof}

\section{Convergence result}
In this section we prove that the Riemannian analogue of a linear subdivision scheme in a \CH\ manifold converges for all input data, if the mask satisfies a contractivity condition with contractivity factor smaller than \(1\), see Theorems \ref{convergence} and \ref{derivedscheme}.
The condition implying convergence involves derived schemes (and iterates of derived schemes) and is entirely analogous to a well-known criterion which applies in the linear case. 
This kind of result was previously only known for schemes with nonnegative mask (see Theorem 5 of \cite{wallner}). It has already been conjectured in \cite{grohs}. 

\subsection{Contractivity condition}
We begin by adapting Lemma \(3\) of \cite{wallner}.
\begin{lem} \label{inequality2}
Consider points \(x_{j}\), coefficients \(\alpha_{j}\), \(\beta_{j}\), for \(j=-m,\ldots,m+1\), and their center of mass \(x^*=\av(\alpha,x)\),
\(x^{**}=\av(\beta,x)\) in a \CH\ manifold.
Moreover we assume that (\ref{sumcondition1}) holds. Then
\begin{align*}
	\dist (x^{*},x^{**})
	\leqslant 
	\Big(\sum\limits_{j=-m}^{m+1}\,\Big| \sum\limits_{i\leqslant j}\alpha_{i}-\beta_{i}\Big|\, \Big) \cdot \max\limits_{\ell}
\dist (x_{\ell},x_{\ell+1}).
\end{align*}
\end{lem}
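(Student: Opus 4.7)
The plan is to mimic the linear-space proof---where $x^*-x^{**}=\sum_j(\alpha_j-\beta_j)x_j=\sum_jS_j(x_j-x_{j+1})$ by Abel summation with $S_j:=\sum_{i\le j}(\alpha_i-\beta_i)$---by replacing point subtraction with inverse exponentials based at the common point $x^{**}$, and by using the strict convexity of $f_\alpha$ from Lemma~\ref{gradientconvexity} to convert a gradient estimate into a distance estimate.

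First I would turn $L:=\dist(x^*,x^{**})$ into a lower bound on $\|\grad f_\alpha(x^{**})\|$. Let $\gamma\colon[0,1]\to M$ be the geodesic from $x^*$ to $x^{**}$, so $\|\dot\gamma\|\equiv L$. Since $x^*$ minimises $f_\alpha$, the function $h(s):=f_\alpha(\gamma(s))$ satisfies $h'(0)=0$, while Lemma~\ref{gradientconvexity} gives $h''(s)\ge L^2$; integrating then yields $h'(1)\ge L^2$. Using $h'(1)=\<\grad f_\alpha(x^{**}),\dot\gamma(1)\>$ and Cauchy--Schwarz gives $\|\grad f_\alpha(x^{**})\|\ge L$.

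Next, I would estimate $\|\grad f_\alpha(x^{**})\|$ from above. The gradient formula of Lemma~\ref{gradientconvexity} together with $\grad f_\beta(x^{**})=0$ yields
\[
\tfrac12\grad f_\alpha(x^{**})=-\sum_j(\alpha_j-\beta_j)\exp^{-1}_{x^{**}}x_j.
\]
Since $\sum_j(\alpha_j-\beta_j)=0$, the sequence $(S_j)$ has finite support, so summation by parts rewrites the right-hand side as $-\sum_jS_j\bigl(\exp^{-1}_{x^{**}}x_j-\exp^{-1}_{x^{**}}x_{j+1}\bigr)$. The nonpositive-curvature comparison inequality $\|\exp^{-1}_p x-\exp^{-1}_p y\|\le \dist(x,y)$---a standard CAT$(0)$/Toponogov consequence of $K\le 0$---then bounds each tangent-space difference by $\dist(x_j,x_{j+1})$, so
\[
\|\grad f_\alpha(x^{**})\|\le 2\Bigl(\sum_j|S_j|\Bigr)\max_\ell\dist(x_\ell,x_{\ell+1}).
\]
Combined with the lower bound $\|\grad f_\alpha(x^{**})\|\ge L$, this gives the stated inequality up to a universal constant; the sharp constant drops out either by using the tighter Hessian bound $\mathrm{Hess}\,f_\alpha\ge 2\,\mathrm{Id}$ valid in \CH\ manifolds, or equivalently by running the same estimate variationally along the curve of minimisers $y(t)$ of the interpolated weights $w(t)=(1-t)\alpha+t\beta$, using $\dist(x^*,x^{**})\le\int_0^1\|\dot y(t)\|\,dt$ and implicit differentiation of the stationarity condition $\grad f_{w(t)}(y(t))=0$.

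The only non-formal ingredient---and the place where $K\le 0$ is essential---is the nonexpansiveness of $\exp^{-1}_p$, which is precisely what converts the Abel-summed estimate in the tangent space at $x^{**}$ back into the metric distances $\dist(x_j,x_{j+1})$ appearing on the right-hand side. Everything else is bookkeeping with the gradient and Hessian formulas provided by Lemma~\ref{gradientconvexity}.
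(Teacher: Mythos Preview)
Your proposal is correct and follows the same overall strategy as the paper: bound $\dist(x^*,x^{**})$ from above by $\|\tfrac12\grad f_\alpha\|$ at the other minimiser via strict convexity, express this gradient as $\sum_j(\alpha_j-\beta_j)\exp^{-1}x_j$, and then use that $\exp^{-1}_p$ is $1$-Lipschitz in a Cartan--Hadamard manifold to pass to the distances $\dist(x_j,x_{j+1})$.

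There is one genuine tactical difference worth noting. To turn $\sum_j(\alpha_j-\beta_j)\exp^{-1}x_j$ into something controlled by $\sum_j|S_j|$, the paper introduces the piecewise-constant index function $\nu$ and auxiliary functions $\epsilon_1,\epsilon_2$, rewrites the sum as a difference of integrals, and then verifies $\int_0^{\delta_-}|\nu(t+\delta_-)-\nu(t)|\,dt=\sum_j|S_j|$. Your Abel summation $\sum_j\delta_j v_j=\sum_j S_j(v_j-v_{j+1})$ achieves the same bound in one line and is strictly simpler; nothing is lost by bypassing the integral machinery here. Regarding the factor of $2$: you are right that Lemma~\ref{gradientconvexity} as \emph{stated} only yields $\|\grad f_\alpha(x^{**})\|\ge L$, whereas the sharp constant needs $\|\tfrac12\grad f_\alpha(x^{**})\|\ge L$. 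The paper handles this by re-deriving the sharper inequality \eqref{inequality1} inside the proof (the Jacobi-field computation in fact gives $\langle J(1),J'(1)\rangle\ge\langle\dot\gamma,\dot\gamma\rangle$, i.e.\ $\mathrm{Hess}\,f_\alpha\ge 2\,\mathrm{Id}$, which is stronger than the lemma's stated bound). Your first proposed fix---invoking the tighter Hessian bound directly---is exactly this, and is preferable to the more elaborate variational alternative you sketch.
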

To prove this result we make use of the representation of \(f_{\alpha}\) (resp.\ \(f_{\beta}\)) as in (\ref{functionasintegral}) in terms of the function \(\sigma\) (resp.\ \(\tau\)). 
Before we give the proof of Lemma \ref{inequality2} we illustrate the idea by means of our main example:

\begin{ex} \label{Chaikin_fourpoint4}
From Example \ref{Chaikin_fourpoint2} we know that 
\begin{align*}
	f_{\alpha}(x)
	=
		-\int_0^{\alpha_{-}} 
		\dist (x_{\sigma(t)},x)^2 dt
		+\int_{\alpha_{-}}^{\alpha_{-}+\alpha_{+}} 
		\dist (x_{\sigma(t)},x)^2 dt.
\end{align*}
Similarly we obtain 
\begin{align*}
f_{\beta}(x)=-\int_0^{\beta_{-}} \dist (x_{\tau(t)},x)^2 dt+\int_{\beta_{-}}^{\beta_{-}+\beta_{+}} \dist (x_{\tau(t)},x)^2 dt,
\end{align*} 
with \(\beta_{-}=\frac{2}{32}\), \(\beta_{+}=\frac{34}{32}\) and
\begin{align*}
\tau(t)=
	\left\{\begin{array}{@{}rl}
	-1
	& \text{for}~ t\in [ 0, \frac1{32}]\\[0.7ex]
	2
	& \text{for}~ t\in \left( \frac1{32},\frac{2}{32} \right]\\[0.7ex]
	0
	& \text{for}~ t\in \left( \frac{2}{32},\frac{15}{32}\right]\\[0.7ex]
	1
	& \text{for}~ t\in \left( \frac{15}{32},\frac{36}{32}\right].
	\end{array}\right.
\end{align*}
In order to get the desired result in Lemma \ref{inequality2} we estimate the distance between the gradients of the functions \(f_{\alpha}\) and \(f_{\beta}\) at the point \(x^*=\av(\alpha,x)\) (as explained in more detail in the proof of the Lemma \ref{inequality2}). To be able to do so, we make use of Lemma \ref{gradientconvexity} and convert the resulting four integrals in two by considering differences of the coefficients. In this case, we get
\begin{align*}
	&\Big\| \frac 12 \grad f_{\beta}(x^*)
		-\frac 12 \grad  f_{\alpha}(x^*)\Big\|  
	=\Big\|\sum\limits_{j=-1}^{2} 
		(\alpha_{j}-\beta_{j})
		\exp_{x*}^{-1}x_{j}\Big\|
	\\
	=\,&\Big\| 
		-\int_0^{\frac{8}{32}} 
		\exp_{x*}^{-1}x_{\nu(t+\frac{8}{32})} dt
	+\int_0^{\frac{8}{32}} 
		\exp_{x*}^{-1}x_{\nu(t)} dt\Big \|,
\end{align*}
with 
\begin{align*}
\nu(t)=
	\begin{cases}
	0
	& \text{for}~ t\in [ 0, \frac{8}{32}]\\
	1
	& \text{for}~ t\in \left( \frac{8}{32},\frac{16}{32} \right].
\end{cases}
\end{align*} Note that the construction of the function \(\nu\) is similar to the one of \(\sigma\) in (\ref{functionasintegral}).
\Exende
\end{ex}
We are now ready to give the proof of Lemma \ref{inequality2} which follows the structure in \cite{wallner} and the ideas of \cite{karcher}.

\begin{proof}[Proof of Lemma \ref{inequality2}]
Lemma \ref{gradientconvexity} implies that 
	\begin{align*}
	\frac 12 \grad 
		f_{\alpha}(x)
	=\int_0^{\alpha_{-}} 
		\exp_x^{-1}x_{\sigma(t)} dt
	-\int_{\alpha_{-}}^{\alpha_{-}+\alpha_{+}} 
		\exp_x^{-1}x_{\sigma(t)} dt.
	\end{align*}
We now make use of the ideas of the proof of Theorem \(1.5.\) in \cite{karcher} to obtain a lower bound for the absolute value of the gradient of \(\frac1{2}f_{\alpha}(x)\). 
Let \(\gamma\) denote the geodesic starting from \(x^*\) and ending in \(x\) and let \(c_{t}(u,s)=\exp_{x_{\sigma(t)}}\big(u\cdot \exp_{x_{\sigma(t)}}^{-1}\gamma(s)\big)\) be the family of geodesics from \(x_{\sigma(t)}\) to \(\gamma(s)\). We obtain
	\begin{align*}
	&
	\Big\|\frac 12 \grad 
		f_{\alpha}(\gamma(1))\Big\|
	\cdot
		\Big\|\dot\gamma(1)\Big\|
	\geqslant 
	\int_0^1
		\frac{d}{ds}\Big\<\frac 12 \grad f_{\alpha}(\gamma(s)),\
		\dot\gamma(s)\Big\>\ ds 
	\\ 
	&\qquad
	=\int_0^1\Big \<
		\Big(-\int_0^{\alpha_{-}}
	+\int_{\alpha_{-}}^{\alpha_{-}+\alpha_{+}}\Big)
		\frac{\nabla}{\pdu} \frac{\partial}{\pds}c_{t}(1,s) dt
		,\ \dot c_{t}(1,s) \Big \>\ ds 
	\\
	&\qquad
	=\Big(-\int_0^{\alpha_{-}}
	+\int_{\alpha_-}^{\alpha_-+\alpha_+}\Big)
	\int_0^1
		\Big \<\frac{\nabla}{\pdu} \frac{\partial}{\pds}c_{t}(1,s),
		\ \dot c_{t}(1,s) \Big \>\ ds~dt,
\end{align*}
with \(\dot c_{t}(u,s)=\frac{d}{ds}c_{t}(u,s)\). Let \(J(u)=c_{t}(u,s)\) denote the Jacobi field along the curve \(u\mapsto c_{t}(u,s)\). The dependence on \(s\) and \(t\) is not indicated in the notation. We have \(J(1)=\dot \gamma(s)\) and \(\frac{\nabla}{du} \dot c_{t}(1,s)=J'(1)\).
Using (\ref{sumcondition2}) we obtain
\begin{align*}
	\Big\|\frac 12 \grad 
		f_{\alpha}(\gamma(1))\Big\|
	\cdot
		\Big\|\dot\gamma(1)\Big\|
	&\geqslant 
	\Big(-\int_0^{\alpha_{-}}+
		\int_{\alpha_{-}}^{\alpha_{-}+\alpha_{+}}\Big)
		\int_0^1
	\< J'(1),J(1)\>\ ds~dt
	\\&=
	\< J'(1),J(1)\> 
	\geqslant \< \dot \gamma(s),\dot \gamma(s)\>.
	\end{align*}
The last inequality follows in the same way as in the proof of Lemma \ref{gradientconvexity}. By the definition of the geodesic \(\gamma\) we conclude that
	\begin{align} \label{inequality1}
	\Big\|\frac 12 \grad  f_{\alpha}(x)\Big\|
	\geqslant \dist (x,x^*).
	\end{align}
In a manner similar to \cite[Cor.\ 1.6]{karcher} we observe the following (see also Example \ref{Chaikin_fourpoint4}):
	\begin{align*}
	\Big\| \frac 12 \grad f_{\beta}(x^*)\Big\|
	&=  
	\Big\| \frac 12 \grad  f_{\beta}(x^*)
	- \frac 12 \grad  f_{\alpha}(x^*)\Big\|
	=\Big\| \sum\limits_{j=-m}^{m+1}(\alpha_{j}-\beta_{j})~
		 \exp_{x*}^{-1}x_{j} \Big\|.
	\end{align*}
We define the sequence 
	\(\delta=(\delta_{j})_{j=-m,\dots,m+1}\) by 
	\(\delta_{j}=\alpha_{j}-\beta_{j}\). 
 Let \(\nu\) be the function constructed as \(\sigma\) in (\ref{functionasintegral}) with respect to the coefficients \(\delta\), i.e., the value of \(\nu\) is constant in intervals of length \(| \delta_{j} |\) and given by the corresponding index. Denote by \(\delta_{-}\) (resp.\ \(\delta_{+}\)) the sum of the absolute values of the negative (resp.\ nonnegative) coefficients of \(\delta\). By (\ref{sumcondition1}) \(\delta_{-}=\delta_{+}\). As in (\ref{functionasintegral}) we rewrite the sum above as an integral 
	\begin{align*}
	&\Big\| \sum\limits_{j=-m}^{m+1}(\alpha_{j}-\beta_{j})  
		~\exp_{x*}^{-1}x_{j} \Big\|
	=\Big\|
		-\int_0^{\delta_{-}} 
			\exp_{x*}^{-1}x_{\nu(t)} dt
		+\int_{\delta_{-}}^{\delta_{-}+\delta_{+}} 
			\exp_{x*}^{-1}x_{\nu(t)} dt \Big\| 
	\\
	&=
		\Big\|
		\int_0^{\delta_{-}} 
			\left(-\exp_{x*}^{-1}x_{\nu(t)}
			+\exp_{x*}^{-1}x_{\nu(t+\delta_{-})} \right)
		\,dt \Big\| 
	\leqslant
		\int_0^{\delta_{-}} 
			\Big\|\exp_{x*}^{-1}x_{\nu(t+\delta_{-})}
			-\exp_{x*}^{-1}x_{\nu(t)}  \Big\|
		~dt.
	\end{align*}
With the help of (\ref{inequality1}) we conclude that
\begin{align*}
	& \dist (x^*,x^{**})
	\leqslant \Big\|\frac12 \grad 
		 f_{\beta}(x^*)\Big\| 
	\leqslant 
		\int_0^{\delta_{-}} 
			\Big\|\exp_{x*}^{-1}x_{\nu(t+\delta_{-})}
			-\exp_{x*}^{-1}x_{\nu(t)}  \Big\|~dt 
	\\ &\quad
	\leqslant\int_0^{\delta_{-}} 
			\dist (x_{\nu(t+\delta_{-})},x_{\nu(t)}) dt 
	\leqslant \int_0^{\delta_{-}}
		| \nu(t+\delta_{-})-\nu(t)| ~dt 
		\cdot \max\limits_{\ell}\dist (x_{\ell},x_{\ell+1}).
	\end{align*} 
To obtain the third inequality above we used the fact that in Cartan-Hadamard manifolds,
the exponential map does not decrease distances, see for example \cite{kobayashi}.

It remains to show that 
\begin{align*}
	\int_0^{\delta_{-}}| \nu(t+\delta_{-})-\nu(t)| ~dt
	=\sum\limits_{j=-m}^{m+1}
		\Big| \sum\limits_{i\leqslant j}\alpha_{i}-\beta_{i}\Big|. 
\end{align*}
Therefore we split the sequence of coefficients \(\delta\) in two sequences \(\eta^1\), \(\eta^{2}\) defined by
\setlength{\belowdisplayskip}{0pt}%
\setlength{\belowdisplayshortskip}{0pt}%
\begin{align*}
\eta^1_{j}:=
\begin{cases}
\delta_{j}
& \text{if}~ \delta_{j}\geqslant 0\\
0
& \text{else}\\
\end{cases}
 \hspace{0.5cm} \text{and} \hspace{0.5cm}
 \eta^{2}_{j}:=
\begin{cases}
| \delta_{j}|
& \text{if}~ \delta_{j}< 0\\
0
& \text{else}.\\
\end{cases}
\end{align*}
Similarly to the construction in the proof of Lemma 3 of \cite{wallner} we consider the function \(\epsilon_1\) given by 
\begin{align*}
	\epsilon_1:[ 0,\delta_{-}]&\to \{ -m,\ldots,m+1 \}, \quad
		\epsilon_1(t)
	:=
		\sup\Big\{ j ~\Big|~
		 \sum\limits_{i\leqslant j} \eta^1_{i} < t\Big\} +1.
\end{align*} 
Analogously we define \(\epsilon_{2}\) for the sequence \(\eta^{2}\). We finally obtain
\begin{align*}
	& \int_0^{\delta_{-}}
		| \nu(t+\delta_{-})-\nu(t)| ~dt
	=
	\int_0^{\delta_{-}} 
		| \epsilon_1(t)-\epsilon_2(t)| ~dt 
	\\
	&\quad =
	\sum_{j=-m}^{m+1}
	\Big |\sum_{i\leqslant j} \eta^1_{i}
	-\sum_{i\leqslant j} \eta^{2}_{i} \Big| 
	=
	\sum_{j=-m}^{m+1}
	\Big|\sum_{i\leqslant j}\alpha_{i}-\beta_{i} \Big|.  
\end{align*} 

\smallskip\noindent
 This concludes the proof of Lemma \ref{inequality2}.
\end{proof}

Recall that a binary subdivision scheme \(S\) is given by  \(Sx_{i}\) \(=\) \(\sum_{j\in \Z } a_{i-2j}x_{j}\) with \( \sum_{j\in \Z } a_{i-2j}=1\) for all \(i\). In order to obtain a convergence result for the Riemannian analogue \(T\) of \(S\) we have to estimate the distance between two consecutive points in the sequence \(S^k x\). Let
\(\gamma^{(r)}_{j}= \sum\limits_{i\leqslant j} a_{r-2i}\) and 
\begin{align} \label{contractivityfactor}
	\gamma=
		\max\limits_{r\in \{ 1,2 \}}
		\
		\sum_{j=-m}^{m+1} 
		\left| \gamma^{(r+1)}_{j}-\gamma^{(r)}_{j}\right|.
	\end{align} 
 Then Lemma \ref{inequality2} implies that the subdivision rule \(T\) obeys a so-called \textit{contractivity condition}
\begin{align} \label{contractivity}
	\dist (T^{k}x_{i+1},T^{k}x_{i})
	\leqslant \gamma^{k}\cdot 
	\sup\limits_{\ell} \dist (x_{\ell},x_{\ell+1}).
\end{align}
The factor \(\gamma\) is called \textit{contractivity factor}. In Subsection \ref{secderivedscheme} we show that the value of the contractivity factor \(\gamma\) in (\ref{contractivityfactor}) is closely related to the norm of the derived scheme.

Making use of the result of Hanne Hardering in \cite{hardering} again, it follows that there exists a constant \(C>0\) such that
\begin{align}\label{displacement-safe2}
\dist (Tx_{2i},x_{i}) &\leqslant C\cdot\sup\limits_{\ell} \dist (x_{\ell},x_{\ell+1}), \hspace{0.2cm} i\in \Z .
\end{align}
Subdivision schemes satisfying inequality (\ref{displacement-safe2}) have been called \textit{displacement-safe} by \cite{dyn3}.
Together with (\ref{contractivity}) we conclude that
\begin{align} \label{displacement-safe}
\dist (T^{k+1}x_{2i},T^{k}x_{i}) &\leqslant C \gamma^{k} \varrho \hspace{0.3cm}\text{with} \hspace{0.3cm} \varrho:= \sup\limits_{\ell} \dist (x_{\ell},x_{\ell+1}).
\end{align}

In the linear case (see \cite{dyn2}) a contractivity factor smaller than 1 itself leads to a convergence result, but this condition is not sufficient in the nonlinear case. Here we additionally need the fact that our schemes are displacement-safe as shown in \cite{dyn3} for manifold-valued subdivision schemes based on an averaging process. For interpolatory subdivision schemes however a contractivity factor smaller than 1 entails convergence of the scheme since (\ref{displacement-safe2}) is satisfied anyway, see \cite{dyn3,wallner2}. 

We now state our convergence result which generalises the result of \cite{wallner}.

\begin{thm} \label{convergence}
Consider a binary, affine invariant subdivision scheme \(S\). Denote by \(T\) the Riemannian analogue of \(S\) in a \CH\ manifold \(M\). Let \(\gamma\) be the contractivity factor defined by (\ref{contractivityfactor}). If \(\gamma <1\), then \(T\) converges to a continuous limit \(T^{\infty}x\) for all input data \(x\).
\end{thm}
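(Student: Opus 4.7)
The plan is to construct, for each iteration index $k$, a piecewise-geodesic interpolant $F_k\colon\R\to M$ of the sequence $T^k x$ and then prove that $(F_k)$ is uniformly Cauchy on compact sets. Since a \CH\ manifold is complete and a uniform limit of continuous curves is continuous, the limit $F_\infty$ will be a continuous curve whose values at the dyadic rationals coincide with the iterates and define $T^\infty x$.

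First I would set $F_k(i/2^k):=T^k x_i$ and extend $F_k$ to each interval $[i/2^k,(i+1)/2^k]$ by the unique geodesic connecting the two endpoint values; this is well defined because $M$ is \CH. By the contractivity estimate~(\ref{contractivity}), each of these geodesic arcs has length at most $\gamma^k\varrho$, where $\varrho:=\sup_\ell \dist(x_\ell,x_{\ell+1})$ (if $\varrho$ is infinite one localises the argument using that the mask has compact support, so that each value $T^k x_i$ depends only on finitely many entries of $x$).

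The crux is estimating $\dist\bigl(F_{k+1}(t),F_k(t)\bigr)$ at a dyadic point $t=i/2^{k+1}$. For an even index $i=2j$ the bound is immediate from displacement-safety~(\ref{displacement-safe}): $\dist(T^{k+1}x_{2j},T^k x_j)\leqslant C\gamma^k\varrho$. For an odd index $i=2j+1$, $F_k(t)$ is the geodesic midpoint of $T^k x_j$ and $T^k x_{j+1}$ and hence lies within $\tfrac12\gamma^k\varrho$ of $T^k x_j$, while $F_{k+1}(t)=T^{k+1}x_{2j+1}$ lies within $\gamma^{k+1}\varrho+C\gamma^k\varrho$ of $T^k x_j$ via the detour through $T^{k+1}x_{2j}$ (combining~(\ref{contractivity}) at level $k+1$ with~(\ref{displacement-safe}) at level $k$). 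The triangle inequality then yields $\dist(F_{k+1}(t),F_k(t))\leqslant C'\gamma^k\varrho$ at every dyadic node, and because both curves are piecewise geodesic with arc lengths of order $\gamma^k\varrho$ or smaller, the same bound (up to an absolute constant) extends to all real $t$.

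Summing the resulting geometric series gives $\sup_t\dist(F_{k+n}(t),F_k(t))\leqslant C''\varrho\,\gamma^k/(1-\gamma)$, so $(F_k)$ is uniformly Cauchy on compacta and converges to a continuous curve $F_\infty$, whose dyadic samples are the desired $T^\infty x$. The main technical obstacle will be the odd-node comparison: one cannot match $T^{k+1}x_{2j+1}$ directly against a generation-$k$ sample, so the estimate must route through $T^{k+1}x_{2j}$ and blend displacement-safety with contractivity at the next scale; this is exactly where $\gamma<1$ enters, both to make the telescoping sum summable and to keep the detour term of the same order as the leading term.
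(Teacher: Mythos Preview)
Your proposal is correct and follows essentially the same route as the paper. Your piecewise-geodesic interpolant $F_k$ is exactly the paper's $c_k$, and your node-by-node analysis (even nodes via displacement-safety, odd nodes via a detour through $T^{k+1}x_{2j}$) reproduces the paper's three-term estimate $\dist(c_m,c_{m+1})\leqslant \varrho\gamma^m + C\varrho\gamma^m + \varrho\gamma^{m+1}$; the paper simply states this bound and refers to \cite{wallner} for the details you have spelled out, then sums the geometric series and invokes completeness of $C(J,M)$ just as you do.
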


\begin{proof} [Proof]
Denote by \(c_{k}:\R  \to M\) the broken geodesic which is the union of geodesic segments \(c_{k} \big |{[ \frac{i}{2^k},\frac{i+1}{2^k}]}\) which connect successive points \(T^{k}x_{i}\) and \(T^{k}x_{i+1}\). We show that \(\big(c_{k}|J\big)_{k\geqslant 0}\) is a Cauchy sequence in \(C(J,M)\) for any  interval \(J=[ a,b] \). The metric on \(C(J,M)\) is given by \(\dist (g,h):=\max_{t\in J} \dist (g(t),h(t))\). We now proceed as in the proof of Proposition \(4\) of \cite{wallner}. 
Since \(T\) satisfies (\ref{contractivity}) and is displacement-safe it follows from the definition of the geodesics that
\begin{align*}
\dist (c_{m},c_{m+1})\leqslant \varrho\gamma^m +C\varrho\gamma^m +\varrho\gamma^{m+1}.
\end{align*}
Therefore
\begin{align*}
\dist (c_{m},c_{n})\leqslant 
\big(\varrho +C\varrho +\varrho\gamma\big) \frac{\gamma^m-\gamma^n}{1-\gamma}
\end{align*}
for all \(m \leqslant n\). Thus  \(\big(c_{k}\big|J\big)_{k\geqslant 0}\) is a Cauchy sequence in \(C(J,M)\) for any interval \(J=[ a,b ]\). Completeness of the space \(C(J,M)\) implies existence of the limit function \(T^{\infty}x\).
\end{proof}

\begin{ex}
We compute the contractivity factor of the subdivision scheme introduced in Example \ref{Chaikin_fourpoint}. Using our previous results we get
\begin{align} \label{Chaikin_fourpoint3}
\gamma=\mathrm{max}\Big\{\frac{28}{32},\frac{8}{32}\Big\}=\frac{28}{32}<1.
\end{align}
Thus the Riemannian analogue of the linear scheme converges in \CH\ manifolds for all input data. Figure \ref{chaikinvierpunkt_plot} illustrates the action of this subdivision scheme in the hyperbolic plane.
\Exende
\end{ex}

\begin{figure}[t]
\centering
\includegraphics[scale=1.3]{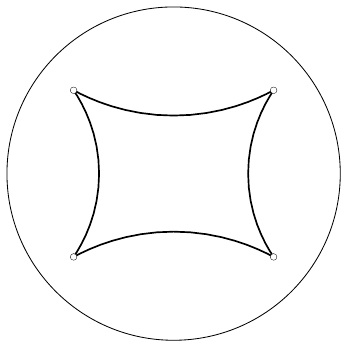}
\hspace*{0.1cm}
\includegraphics[scale=1.3]{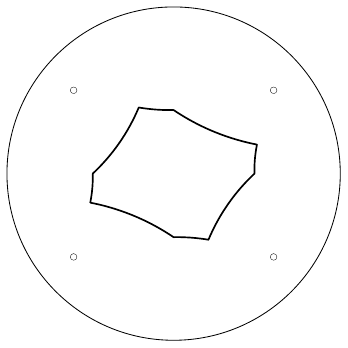}
\hspace*{0.1cm}
\includegraphics[scale=1.3]{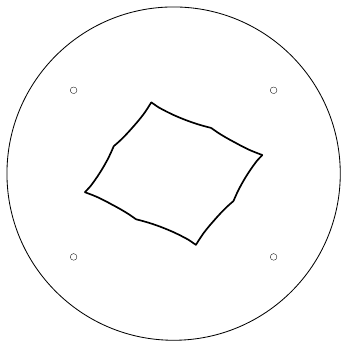}
\caption{Subdivision algorithm of Example \ref{Chaikin_fourpoint} with initial data \(x_0=(0.6, 0.5)\), \(x_1=(0.6, -0.5)\), \(x_2=(-0.6, -0.5)\) and \(x_3=(-0.6 ,0.5)\) in the hyperbolic plane represented with the Poincar\'{e} disk model. From left to right: initial polygon, polygon after one refinement step, polygon after 4 refinement steps.}
\label{chaikinvierpunkt_plot}
\end{figure}

\subsection{Subdivision schemes with dilation factor $N\geqslant2$} \label{arbitrarydilationfactor}

So far we considered subdivision schemes with dilation factor \(N=2\). 
In this section, we extend the convergence result given in Theorem \ref{convergence} to subdivision schemes with arbitrary dilation factor. We still extend a linear subdivision scheme \(S\) to its nonlinear counterpart \(T\) by using the Riemannian analogue introduced in Subsection \ref{Riemannian analogue}. Analogous to the binary case, we say that \(T\) satisfies a contractivity condition with contractivity factor \(\gamma \) if
\begin{align*}
\dist \big(T^{k}x_{i+1},T^{k}x_{i}\big)\leqslant \gamma^{k}\cdot \sup\limits_{\ell} \dist \big(x_{\ell},x_{\ell+1}\big), \hspace{0.2cm} i\in \Z .
\end{align*}
Also we say that \(T\) is displacement-safe if there exists a constant \(C>0\) such that
\begin{align*}
\ \ \dist \big((Tx)_{Ni},x_{i}\big)\leqslant C\cdot\sup\limits_{\ell} \dist \big(x_{\ell},x_{\ell+1}\big), \hspace{0.2cm} i\in \Z .
\end{align*}

\begin{thm} \label{convergencearbirtraryN}
Let \(T\) denote the Riemannian analogue of the linear subdivision rule \(S\) in a \CH\ manifold \(M\) satisfying (\ref{affine_invariance}). Let \(\gamma^{(r)}_{j}= \sum\limits_{i\leqslant j} a_{r-Ni}\) and 
	\begin{align} \label{contractivityfactorN}
		\gamma
	=
		\max \limits_{r\in \{1,\dots,N\}} 
		\sum_{j} 
		\left| \gamma^{(r+1)}_{j}-\gamma^{(r)}_{j}\right|.
	\end{align} If \(\gamma<1\),
then \(T\) converges to a continuous limit \(T^{\infty}x\) for all input data \(x\).
\end{thm}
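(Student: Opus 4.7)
The plan is to mirror the proof of Theorem \ref{convergence} essentially step by step, with only the bookkeeping adapted to the dilation factor $N$. Three ingredients are required: a one-step contraction between consecutive output points, a displacement-safe estimate as in (\ref{displacement-safe2}), and the broken-geodesic Cauchy argument.

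First, for each $r \in \{1,\dots,N\}$, the refined points $(Tx)_r$ and $(Tx)_{r+1}$ are weighted Riemannian centers of mass of the same input sequence, with weight sequences $(a_{r-Ni})_i$ and $(a_{r+1-Ni})_i$, both of which sum to $1$ by (\ref{affine_invariance}). Lemma \ref{inequality2} is stated for arbitrary such weight sequences and therefore applies unchanged, yielding
\begin{align*}
\dist\bigl((Tx)_r,(Tx)_{r+1}\bigr) \leq \Bigl(\sum_j \bigl|\gamma_j^{(r+1)} - \gamma_j^{(r)}\bigr|\Bigr)\sup_\ell \dist(x_\ell,x_{\ell+1}).
\end{align*}
Taking the maximum over $r\in\{1,\dots,N\}$ yields one-step contraction by the factor $\gamma$ of (\ref{contractivityfactorN}), and iterating on $T^{k-1}x,T^{k-2}x,\dots$ gives the full contractivity condition $\dist(T^kx_{i+1},T^kx_i)\leq \gamma^k \varrho$, where $\varrho := \sup_\ell \dist(x_\ell,x_{\ell+1})$.

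Next, displacement-safety is extracted from Lemma \ref{existence}: the point $(Tx)_{Ni}$ lies in a closed ball around $x_i$ whose radius depends only on the fixed mask and on the mutual distances of the finitely many relevant inputs, each bounded by a fixed multiple of $\varrho$. This produces a constant $C>0$ independent of $i$ with $\dist((Tx)_{Ni},x_i)\leq C\varrho$; combined with the contractivity along the orbit, one obtains $\dist(T^{k+1}x_{Ni}, T^kx_i)\leq C\gamma^k\varrho$.

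Finally, I define $c_k:\R\to M$ as the broken geodesic through the points $T^kx_i$ with nodes $i/N^k$. On each interval $[i/N^k,(i+1)/N^k]$, the curve $c_k$ is a single geodesic segment of length at most $\gamma^k\varrho$, while $c_{k+1}$ is a concatenation of $N$ geodesic segments of length at most $\gamma^{k+1}\varrho$ each, with endpoints at distance at most $C\gamma^k\varrho$ from the corresponding endpoints of $c_k$. The triangle inequality gives $\dist(c_k,c_{k+1})\leq (1 + C + N\gamma)\gamma^k\varrho$; since $\gamma<1$, telescoping this geometric bound shows that $(c_k|_J)_{k\geq 0}$ is Cauchy in $C(J,M)$ on every compact interval $J$, and completeness of $C(J,M)$ delivers the continuous limit $T^\infty x$. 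The only fresh bookkeeping compared with the binary case is the constant in this final estimate, which absorbs the dilation factor $N$ rather than $2$; no new geometric idea is required, and I expect no substantive obstacle since Lemmas \ref{inequality2} and \ref{existence} already hold for general weight sequences.
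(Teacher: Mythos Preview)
Your proposal is correct and follows essentially the same route as the paper's own proof: contractivity via Lemma \ref{inequality2} applied to the weight pairs $(a_{r-Nj})_j$ and $(a_{r+1-Nj})_j$, displacement-safety via Hardering's Lemma \ref{existence}, and the broken-geodesic Cauchy argument with nodes at $i/N^k$. The paper's version is terser---it simply asserts that the binary argument goes through with $N$ in place of $2$---while you have spelled out the bookkeeping (including the constant $(1+C+N\gamma)$ in the step estimate, which differs inessentially from the $(1+C+(N-1)\gamma)$ that appears later in Proposition \ref{HoelderarbitraryN}).
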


\begin{proof} [Proof]
As in the binary case, we see that \(T\) satisfies the contractivity condition with factor 
	\[\gamma
	=\max \limits_{r\in \{1,\dots,N\}} \sum_{j}
	 \left| \gamma^{(r+1)}_{j}-\gamma^{(r)}_{j}\right|.\]
 By the result of Hanne Hardering in \cite{hardering} it follows that there exists a constant \(C>0\) such that
\begin{align*}
\dist \big((Tx)_{Ni},x_{i}\big)\leqslant C\cdot \sup\limits_{\ell}
		\dist\big(x_{\ell},x_{\ell+1}\big).
\end{align*}
So \(T\) is displacement-safe. We conclude that
\begin{align*}
\dist \big((T^{k+1}x)_{Ni},(T^{k}x)_{i}\big)\leqslant C\gamma^{k}
	\sup\limits_{\ell}\dist\big(x_{\ell},x_{\ell+1}\big).
\end{align*} 
If we consider the broken geodesics \(c_{k}:\R  \to M\), where \(c_{k} \big |{[ \frac{i}{N^k},\frac{i+1}{N^k}]}\) is the geodesic joining \(T^{k}x_{i}\) and \(T^{k}x_{i+1}\), the claim follows in the same way
as in the proof of Theorem~\ref{convergence}.
\end{proof}

\subsection{Derived scheme} \label{secderivedscheme}
For every linear, affine invariant subdivision scheme \(S\) there exists the \textit{derived scheme} \(S^{*}\) given by the rule \(S^{*}\Delta=N\Delta S\) with \(\Delta x_{i}=x_{i+1}-x_{i}\), see \citep{dyn2}.
In this section we show that the contractivity factor (\ref{contractivityfactorN}) is closely related to the norm 
\begin{align*}
\Vert S^{*} \Vert := \max\limits_{r \in \{ 1,\dots,N\}}\Big\{ \sum\limits_{j} | a^*_{r-Nj} | \Big\}
\end{align*} of the derived scheme \(S^{*}\) with mask \(a^*\). This result is not surprising since it holds in the linear case as well as for nonlinear subdivision schemes with nonnegative mask \cite{wallner}.

\begin{thm} \label{derivedscheme}
Let \(S\) be a linear, affine invariant subdivision rule with dilation factor \(N\). Denote by \(S^{*}\) its derived scheme. If there exists an integer \(m\geqslant 1\) such that \(\frac1{N^m}\Vert S^{m*} \Vert <1\), then the Riemannian analogue \(T\) of \(S\) in a \CH\ manifold converges for all input data. \end{thm}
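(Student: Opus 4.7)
The plan is to apply the convergence criterion of Theorem~\ref{convergencearbirtraryN} to the iterated scheme \(T^m\), viewed as a single refinement step of dilation factor \(N^m\). The argument proceeds in three stages.

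First, I would identify the contractivity factor in (\ref{contractivityfactorN}) of any linear affine invariant subdivision scheme with the normalised derived scheme norm. Using the affine invariance \(\sum_j a_{r-Nj}=1\) and summation by parts, one verifies that \(\gamma^{(r+1)}_{j}-\gamma^{(r)}_{j}=-\frac{1}{N} a^{*}_{r-Nj}\), so the maximum in (\ref{contractivityfactorN}) equals \(\frac{1}{N}\Vert S^{*}\Vert\). This immediately settles the case \(m=1\) via Theorem~\ref{convergencearbirtraryN}. Applying the same identity to \(S^m\), which is itself a linear affine invariant scheme of dilation \(N^m\) with derived scheme \(S^{m*}\), the contractivity factor of \(S^m\) is exactly \(\frac{1}{N^m}\Vert S^{m*}\Vert\). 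Thus the hypothesis says that \(S^m\), regarded as a one-step scheme, has contractivity factor strictly less than one.

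Second, I would show that the nonlinear iterate \(T^m\) inherits this contractivity, namely
\begin{align*}
\sup_{i}\dist\bigl((T^{m}x)_{i+1},(T^{m}x)_{i}\bigr)\leqslant \frac{1}{N^m}\Vert S^{m*}\Vert\cdot \sup_{\ell}\dist(x_\ell,x_{\ell+1}),
\end{align*}
together with displacement-safety of \(T^m\) viewed as a scheme of dilation \(N^m\), which follows by iterating (\ref{displacement-safe2}) through the \(m\) intermediate levels. I would achieve this by unfolding the \(m\) nested geodesic averages defining \((T^m x)_i\) and running the gradient-convexity argument of Lemmas~\ref{gradientconvexity} and \ref{inequality2} at the level of the compound refinement, tracking cumulative coefficient differences simultaneously across all \(m\) levels so that the resulting bound reflects the derived scheme \(S^{m*}\) of \(S^m\) rather than an \(m\)-fold product of single-level bounds.

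With these two properties of \(T^m\) established, the broken-geodesic Cauchy argument from the proof of Theorem~\ref{convergence} applies verbatim with \(T^m\) as the scheme and \(N^m\) as the dilation, producing a Cauchy sequence of broken geodesics \((c_{km})_{k\geqslant 0}\) in \(C(J,M)\) for every compact interval \(J\). The intermediate refinements \(c_{km+j}\) for \(1\leqslant j\leqslant m-1\) are controlled by applying the single-step displacement-safety of \(T\) to \(T^{km}x\) at most \(m-1\) times, so the full sequence \((c_k)_{k\geqslant 0}\) is Cauchy and its limit is the desired continuous curve \(T^{\infty}x\). The main obstacle is the second stage: a naive iteration of the weighted-sum form of Lemma~\ref{inequality2} through \(m\) levels gives only \(\bigl(\frac{1}{N}\Vert S^{*}\Vert\bigr)^m\), which in general is strictly larger than \(\frac{1}{N^m}\Vert S^{m*}\Vert\), since distances do not admit the sign cancellations on which the linear derived-scheme identity \(\Delta S^m=\frac{1}{N^m}S^{m*}\Delta\) relies. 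Obtaining the sharp constant therefore requires a genuinely \(m\)-level gradient-convexity computation that recovers the telescoping at the level of tangent vectors, via the exponential map at the minimiser, before passing to norms.
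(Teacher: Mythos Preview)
Your first stage coincides with the paper's computation: the contractivity factor in (\ref{contractivityfactorN}) equals $\frac{1}{N}\Vert S^{*}\Vert$. From there, however, the paper takes a much shorter route than you propose. It does \emph{not} attempt to establish that the iterate $T^m$ of the Riemannian analogue of $S$ enjoys contractivity $\frac{1}{N^m}\Vert S^{m*}\Vert$; it simply notes that $S^m$ is itself a linear affine-invariant scheme of dilation $N^m$, whose contractivity factor is $\frac{1}{N^m}\Vert S^{m*}\Vert$ by the same identity applied to the mask of $S^m$, and then invokes Theorem~\ref{convergencearbirtraryN} for \emph{that} scheme in one line. What converges is thus the Riemannian analogue of $S^m$ --- a single center-of-mass step with the compound mask --- rather than $T^m$. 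The paper never unfolds nested geodesic averages, never runs a multi-level gradient argument, and never controls intermediate refinements $c_{km+j}$; none of your second and third stages appear.

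The obstacle you flag (that naive iteration of Lemma~\ref{inequality2} yields only $\bigl(\frac{1}{N}\Vert S^{*}\Vert\bigr)^m$, because the sign cancellations behind $\Delta S^m=\frac{1}{N^m}S^{m*}\Delta$ are lost once one passes to distances) is genuine for $T^m$, but the paper bypasses it by changing the scheme rather than sharpening the estimate. Your proposed $m$-level gradient-convexity computation would be required only if one insists on the sharp constant for $T^m$ itself; the paper makes no such claim, and its four-point example in Subsection~\ref{examplefourpointscheme} explicitly rebuilds the Riemannian analogue from $S^2$ as a fresh dilation-$4$ scheme. So either you are attempting something strictly stronger than what the paper proves, or --- reading the theorem literally as a statement about the Riemannian analogue $T$ of $S$ --- you have correctly located a gap that the paper's own proof does not close.
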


We can reuse the proof of Theorem 5 of \cite{wallner} to show Theorem \ref{derivedscheme}. We repeat it here for the reader's convenience.

\begin{proof}
Let \(a^*=(a^{*}_{j})_{j \in \Z }\) denote the mask of the derived scheme \(S^{*}\). We consider the special input data \(y=(y_{j})_{j\in \Z }\) given by
\begin{align*}
y_{j}=
	\left\{\begin{array}{@{}rl}
	-1
	& \text{if}~ j\leqslant 0\\[1ex]
	0
	& \text{else}.
	\end{array}\right.
\end{align*} 
We obtain
\begin{align*}
		\frac1{N} a^{*}_ {l}
	&= 
		\frac1{N}\sum\nolimits_{k}a^{*}_{l-Nk}(y_{k+1}-y_{k})
	=
		\frac1{N}S^{*}(y_{l+1}-y_{l})=\frac1{N} S^{*}\Delta y_{l} 
	\\ &
		=\Delta Sy_{l}
		=Sy_{l+1}-Sy_{l}
		=\sum\nolimits_{k\leqslant 0}a_{l-Nk}-a_{l+1-Nk}, \quad\text{and}
	\\ 
		\frac1{N} a^{*}_ {r-Nj}
	&=
		\sum\nolimits_{k\leqslant 0}a_{r-N(j+k)}-a_{r+1-N(j+k)}
	=
		\sum\nolimits_{i\leqslant j}a_{r-Ni}-a_{r+1-Ni}.
\end{align*}
By (\ref{contractivityfactorN}) we get
\begin{align*}
	\sup_r \sum_j |\gamma_{j}^{(r)}-\gamma_{j}^{(r+1)}| 
		=\frac1{N} \sup\limits_{r}\sum\limits_{j} | a^{*}_ {r-Nj} |
		=\frac1{N} \Vert S^{*} \Vert.
	\end{align*}
Since the dilation factor of \(S^m\) is \(N^m\), Theorem \ref{convergencearbirtraryN} gives the desired result. 
\end{proof}

We have just seen that the contractivity factor (\ref{contractivityfactorN}) of the Riemannian analogue of a linear subdivision scheme \(S\) is given by 
\begin{align*}
\gamma=\frac1{N}\Vert S^*\Vert.
\end{align*}
So in order to obtain a convergence result, it suffices to check if the norm of the derived scheme \(S^*\) is smaller than the dilation factor. Even if this is not the case we might get a convergence result by considering iterates of dervied schemes \(S^{m*}\), since the contractivity factor might decrease, see
Subsection \ref{examplefourpointscheme}. 

In \citep{dyn2} it is shown that if we ask for uniform convergence of a linear subdivision scheme \(S\), the existence of an integer \(m\geqslant 1\) such that \(\frac1{N^m}\Vert S^{*m}\Vert<1\) is equivalent to the convergence of the scheme.
Thus Theorem \ref{derivedscheme} states that if the linear subdivision scheme converges uniformly, so does its Riemannian analogue in \CH\ manifolds.

\section{H\"older continuity} 
It has been shown in \cite{wallner2} that the limit function of an interpolatory subdivision scheme for manifold-valued data has H\"older continuity \(-\frac{\log\, \gamma}{\log\, 2}\). Here \(\gamma\) is a contractivity factor for the nonlinear analogue of the linear scheme. It depends only on the mask of the scheme. In \cite{dyn2} a similar inequality is proven for uniformly convergent subdivision schemes in linear spaces. We get the following related result. 
 
\begin{prop} \label{Höldercontinuity}
Let \(T\) be the Riemannian analogue of a binary, affine invariant subdivision scheme \(S\) which has contractivity factor \(\gamma<1\). Then the limit curve \(T^{\infty}x\) satisfies
	\begin{align*}
	\dist \big(T^{\infty}x(t_1), T^{\infty}x(t_{2})\big)
	\leqslant D| t_{2}-t_1|^{\iota},
	\end{align*}
with
\begin{align*}
D=2\cdot\Big(\frac{C\varrho +\varrho + \gamma \varrho}{1-\gamma}+\varrho \Big)\ \ \ \ \ \text{and} \ \ \ \ \ \   \iota=1-\frac{\log \Vert S^*\Vert}{\log 2},
\end{align*}
for all \(t_1, t_{2} \in \R \) with \(| t_1-t_{2} | <1\) and all input data \(x\), i.e., the limit curve is H\"older continuous with exponent $\iota$.

Here the data-dependent constant \(\varrho\) is defined by the maximal distance of successive data points which contribute to the limit curve in the interval under consideration.
\end{prop}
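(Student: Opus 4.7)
The plan is to compare the limit curve $T^\infty x$ to the broken geodesic $c_k$ at a well-chosen level $k$ and then estimate $c_k$ directly, exactly as one does for H\"older regularity in the linear case. Everything I need is already implicit in the proof of Theorem~\ref{convergence}: the Cauchy estimate
\[
	\dist(c_m,c_n)\leqslant (\varrho+C\varrho+\varrho\gamma)\frac{\gamma^m-\gamma^n}{1-\gamma}
	\qquad(m\leqslant n)
\]
passes to the limit $n\to\infty$ and yields
\[
	\sup_{t\in\R}\dist\bigl(c_k(t),T^\infty x(t)\bigr)\leqslant \frac{(\varrho+C\varrho+\varrho\gamma)\gamma^k}{1-\gamma}.
\]

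Given $t_1,t_2$ with $0<|t_1-t_2|<1$, I would choose the unique integer $k\geqslant1$ with $2^{-k}\leqslant|t_2-t_1|<2^{-(k-1)}$ and apply the triangle inequality along $c_k$:
\[
	\dist(T^\infty x(t_1),T^\infty x(t_2))\leqslant 2\sup_t\dist(c_k(t),T^\infty x(t))+\dist(c_k(t_1),c_k(t_2)).
\]
For the middle term, $c_k$ is a broken geodesic whose pieces on $[i/2^k,(i+1)/2^k]$ have length $\dist(T^kx_i,T^kx_{i+1})\leqslant\gamma^k\varrho$ by the contractivity condition (\ref{contractivity}); hence its speed is bounded by $2^k\gamma^k\varrho$, and integrating this bound along $[t_1,t_2]$ (irrespective of how many nodes are traversed) gives
\[
	\dist(c_k(t_1),c_k(t_2))\leqslant 2^k\gamma^k\varrho\,|t_2-t_1|\leqslant 2\varrho\gamma^k.
\]
Adding the two contributions produces
\[
	\dist(T^\infty x(t_1),T^\infty x(t_2))\leqslant 2\Big(\tfrac{C\varrho+\varrho+\gamma\varrho}{1-\gamma}+\varrho\Big)\gamma^k=D\gamma^k.
\]

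Finally I convert $\gamma^k$ into $|t_2-t_1|^\iota$. By Subsection~\ref{secderivedscheme} one has $\gamma=\tfrac12\|S^*\|$, so
\[
	\gamma=2^{-\iota},\qquad\iota=1-\frac{\log\|S^*\|}{\log 2},
\]
and $\iota>0$ because $\gamma<1$. The choice of $k$ gives $2^{-k}\leqslant|t_2-t_1|$, hence $\gamma^k=2^{-k\iota}\leqslant|t_2-t_1|^\iota$, and the desired H\"older estimate follows.

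There is no real obstacle to the argument, only bookkeeping: one must verify that the length-along-$c_k$ bound $\dist(c_k(t_1),c_k(t_2))\leqslant2^k\gamma^k\varrho|t_2-t_1|$ is valid whether $t_1,t_2$ lie in one geodesic segment or straddle one or two nodes (the bound holds segment-wise because each piece is a genuine geodesic, and summing is just the triangle inequality), and that the definition of $\varrho$ as the supremum over \emph{contributing} initial distances is the correct one so that the contractivity bound (\ref{contractivity}) applies to all $T^kx_i$ used to build $c_k$ on the relevant interval. Once this is noted, the three estimates combine cleanly into the stated constant $D$ with no extra factors.
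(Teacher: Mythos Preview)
Your argument is correct and follows essentially the same route as the paper's proof: choose the level $k$ so that $|t_2-t_1|$ is comparable to $2^{-k}$, bound $\dist(c_k(t_1),c_k(t_2))$ by $2\gamma^k\varrho$ via contractivity, bound $\dist(T^\infty x(t),c_k(t))$ by the tail of the Cauchy estimate from Theorem~\ref{convergence}, and combine via the triangle inequality before converting $\gamma^k$ to $|t_2-t_1|^\iota$ through $\gamma=2^{-\iota}$. The only cosmetic differences are that the paper works with $c_{k+1}$ (your $k$ is its $k+1$) and states the bound $\dist(c_{k+1}(t_1),c_{k+1}(t_2))\leqslant 2\gamma^{k+1}\varrho$ directly rather than via your more explicit speed-of-$c_k$ argument.
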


\begin{proof}
Assume that \(t_1, t_{2} \in \R \) with \(|t_1-t_2|<1\). Then there exists an integer \(k \in \Z \) such that \(2^{-k-1}\leqslant |t_2-t_1| \leqslant 2^{-k}\). Together with (\ref{contractivity}) we obtain
	\begin{align*}
		\dist\big(c_{k+1}(t_1),c_{k+1}(t_{2})\big)
	\leqslant 
		2\sup_{\ell} \dist\big (T^{k+1}x_{\ell+1},T^{k+1}x_{\ell}\big)
	\leqslant 	
		2\gamma^{k+1}\varrho.
\end{align*}
Using (\ref{displacement-safe}) we have
\begin{align*}
	& \dist\big (T^{\infty}x(t),c_{k+1}(t)\big)
	\leqslant 
	\lim_{\ell \to \infty}
		 \dist\big (c_{\ell}(t),c_{k+1}(t)\big)
	\\ &\quad \leqslant 
		\sum_{j=k+1}^{\infty} \dist\big (c_{j}(t),c_{j+1}(t)\big)
	= 
	\frac{C\varrho + \varrho+ \gamma\varrho}{1-\gamma}\gamma^{k+1}
\end{align*}
for all \(t \in \R \).
Finally,
\begin{align*}
	&
	\dist\big (T^{\infty}x(t_1),\ T^{\infty}x(t_{2})\big) 
	\\ 
	&
	\leqslant
	\dist\big (T^{\infty}x(t_1),\,c_{k+1}(t_1)\big)
	+\dist\big (c_{k+1}(t_1),\,c_{k+1}(t_{2})\big)
	+\dist\big (c_{k+1}(t_{2}),\,T^{\infty}x(t_{2})\big)
	\\ &
	\leqslant D \gamma^{k+1}
	\leqslant D|t_2-t_1|^{\iota}, 
\end{align*}
with \(\iota=-\frac{\log\, \gamma}{\log\, 2}=1-\frac{\log \Vert S^*\Vert}{\log 2}\).
\end{proof}

\begin{ex}
For our main Example \ref{Chaikin_fourpoint} we compute 
\(\iota=-{\log(\frac{28}{32})}/{\log 2}\approx 0.1926\).
\Exende
\end{ex}

For subdivision schemes with arbitrary dilation factor we obtain
\begin{prop} \label{HoelderarbitraryN}
Let \(T\) be the Riemannian analogue of a linear subdivision scheme \(S\) in a \CH\ manifold \(M\) satisfying (\ref{affine_invariance}). Moreover, we assume that \(T\) has contractivity factor \(\gamma<1\). Then the limit curve \(T^{\infty}x\) satisfies
\begin{align*}
\dist \big(T^{\infty}x(t_1),T^{\infty}x(t_{2})\big)\leqslant D|t_2-t_1|^{\iota},
\end{align*}
with
\begin{align*}
D&=2\cdot \frac{C\varrho +\varrho + (N-1)\gamma \varrho}{1-\gamma}+N\varrho \ \ \ \ \ \text{and} \ \ \ \ \ \   \iota=1-\frac{\log \Vert S^*\Vert}{\log\, N}
\end{align*}
for all \(t_1, t_{2} \in \R \) with \(|t_1-t_2| <1\) and all input data \(x\). Here \(N\) is the dilation factor and the data-dependent constant \(\varrho\) is defined by the maximal distance of successive data points which contribute to the limit curve in the interval under consideration.
\end{prop}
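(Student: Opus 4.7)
The plan is to mirror the binary-case proof of Proposition~\ref{Höldercontinuity} essentially line by line, replacing each occurrence of the dyadic scale $2^{-k}$ with the $N$-adic scale $N^{-k}$ and accounting for the fact that each coarse piece of the broken geodesic $c_j$ is now covered by $N$ fine pieces of $c_{j+1}$ rather than two. Given $t_1,t_2\in\R $ with $|t_1-t_2|<1$, I would first pick the integer $k\geqslant 0$ determined by $N^{-(k+1)}\leqslant |t_2-t_1|\leqslant N^{-k}$ and then split
\[
\dist\bigl(T^\infty x(t_1),T^\infty x(t_2)\bigr)\leqslant \dist\bigl(T^\infty x(t_1),c_{k+1}(t_1)\bigr)+\dist\bigl(c_{k+1}(t_1),c_{k+1}(t_2)\bigr)+\dist\bigl(c_{k+1}(t_2),T^\infty x(t_2)\bigr).
\]

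For the middle term I would integrate the speed of $c_{k+1}$ from $t_1$ to $t_2$: each geodesic piece of $c_{k+1}$ has length at most $\gamma^{k+1}\varrho$ by the contractivity condition of Theorem~\ref{convergencearbirtraryN}, and the interval of length at most $N^{-k}=N\cdot N^{-(k+1)}$ meets at most $N$ such fine pieces, yielding the bound $N\gamma^{k+1}\varrho$. For the two outer terms I would telescope $\dist\bigl(T^\infty x(t),c_{k+1}(t)\bigr)\leqslant \sum_{j\geqslant k+1}\dist\bigl(c_j(t),c_{j+1}(t)\bigr)$ and bound a single summand by writing $t\in[i/N^j,(i+1)/N^j]$ and applying the triangle inequality through $T^jx_i$ and $T^{j+1}x_{Ni}$: the step along the coarse piece contributes at most $\gamma^j\varrho$, the displacement-safe gap at the endpoint contributes at most $C\gamma^j\varrho$ (via \eqref{displacement-safe2}), and the walk along the broken fine polygon to $c_{j+1}(t)$ contributes at most $(N-1)\gamma^{j+1}\varrho$, where the factor $(N-1)$ in place of the naive $N$ is obtained by choosing the \emph{closer} of the two coarse endpoints $T^jx_i,\,T^jx_{i+1}$ to play the role of pivot. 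Summing the geometric series in $j$ produces $\dfrac{C\varrho+\varrho+(N-1)\gamma\varrho}{1-\gamma}\,\gamma^{k+1}$ for each outer term.

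Combining the three pieces gives $\dist(T^\infty x(t_1),T^\infty x(t_2))\leqslant D\,\gamma^{k+1}$ with the constant $D$ of the statement. To translate this into a Hölder estimate I would use the identity $\gamma=\|S^*\|/N$ (established inside the proof of Theorem~\ref{derivedscheme}) to write $\gamma=N^{-\iota}$ with $\iota=-\log\gamma/\log N=1-\log\|S^*\|/\log N$; the choice of $k$ then yields $\gamma^{k+1}=N^{-(k+1)\iota}\leqslant |t_2-t_1|^\iota$, which completes the proof. The main obstacle is obtaining the sharp constant $(N-1)$ in the walking estimate inside a coarse segment; a naive traversal only gives $N$, and the tighter value requires playing the two coarse endpoints symmetrically against each other. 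Everything else is a direct and routine generalisation of the binary argument.
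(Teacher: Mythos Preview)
The paper actually omits the proof of Proposition~\ref{HoelderarbitraryN} entirely, stating it directly after the binary case (Proposition~\ref{Höldercontinuity}) as the evident extension to dilation factor $N$. Your proposal is exactly the intended generalisation and is correct: the three-term splitting, the telescoping bound on $\dist(c_j,c_{j+1})$, the geometric summation, and the conversion $\gamma=N^{-\iota}$ via $\gamma=\|S^*\|/N$ all carry over verbatim from the binary argument with the substitutions $2\rightsquigarrow N$ you indicate.

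Two small remarks on your write-up. First, an interval of length $N^{-k}$ can touch $N+1$ fine subintervals, not $N$; but your phrase ``integrate the speed'' already gives the rigorous bound $N^{k+1}|t_2-t_1|\cdot\gamma^{k+1}\varrho\le N\gamma^{k+1}\varrho$, so the conclusion stands. Second, the closer-endpoint trick is even better than you claim: if $t$ lies in the $l$-th fine subinterval ($0\le l\le N-1$), pivoting at $i$ costs at most $l+1$ fine edges while pivoting at $i+1$ costs at most $N-l$, so the minimum is at most $\lceil N/2\rceil$. Your weaker bound $N-1$ is obtained by the cruder observation that for $l\le N-2$ the left pivot already gives $\le N-1$, and for $l=N-1$ the right pivot gives $1$; this is enough to match the constant $D$ in the statement and is presumably what the authors had in mind.
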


\section{The case of manifolds which are not simply connected}
We explain how to extend our previous results to a complete Riemannian manifold \(M\) with sectional curvature \(K\leqslant 0\), i.e., we drop the assumption of simple connectedness. We use the fact that \(M\) has a so-called simply connected covering (\textit{universal covering}) \(\tilde{M}\). This is a simply connected manifold which projects onto \(M\) in a locally diffeomorphic way. The Riemannian metric on \(M\) is transported to \(\tilde{M}\) by declaring the projection \(\pi:\tilde{M} \to M\) a local isometry. An example is shown by Figure \ref{cylinder}, where
a strip of infinite length and width 1 wraps around the cylinder of height 1 infinitely many times. 
For the general theory of coverings, see e.g.\ \cite{hatcher}. Each data point \(x_j\) in \(M\) has a potentially large number of preimages \(\pi^{-1}(x_j)\).

\begin{figure}[b]
\unitlength0.01\textwidth
{\begin{picture}(100,67)
\cput(25,25){\begin{overpic}[width=.26\textwidth]{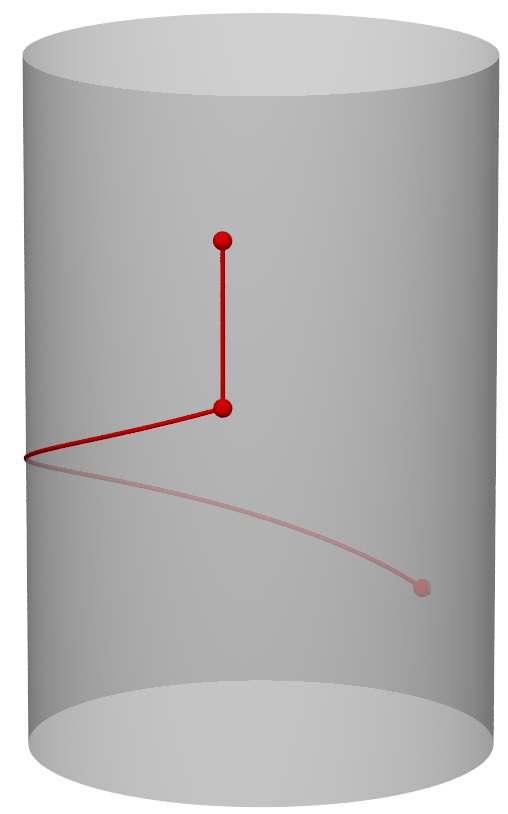}
	\lput(58,78){$M$}
	\put(30,68){$x_0$}
	\put(30,48){$x_1$}
	\put(62,26){$\longleftarrow x_2$}
	\lput(0,42){$c(t)$}
	\end{overpic}}
\cput(75,25){\begin{overpic}[width=.26\textwidth]{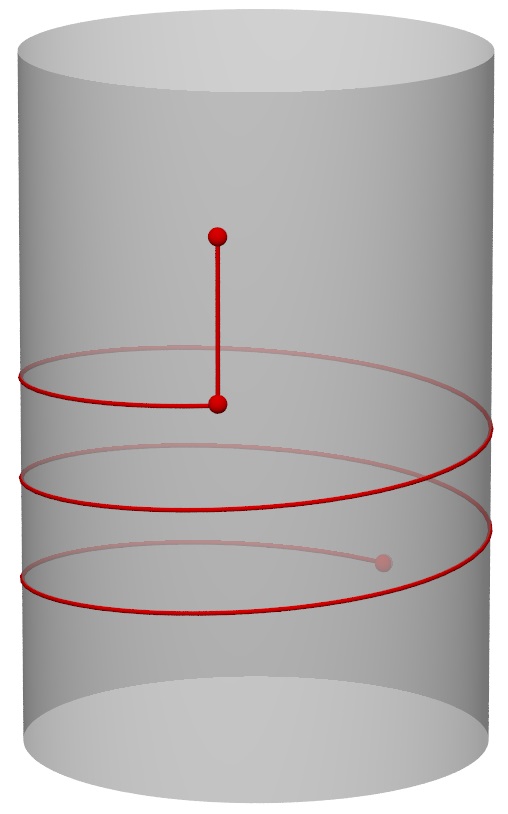} 
	\lput(58,78){$M$}
	\put(30,68){$y_0$}
	\put(30,48){$y_1$}
	\put(62,27){$\longleftarrow y_2$}
	\end{overpic}}
\cput(25,0){\begin{overpic}[width=.48\textwidth]{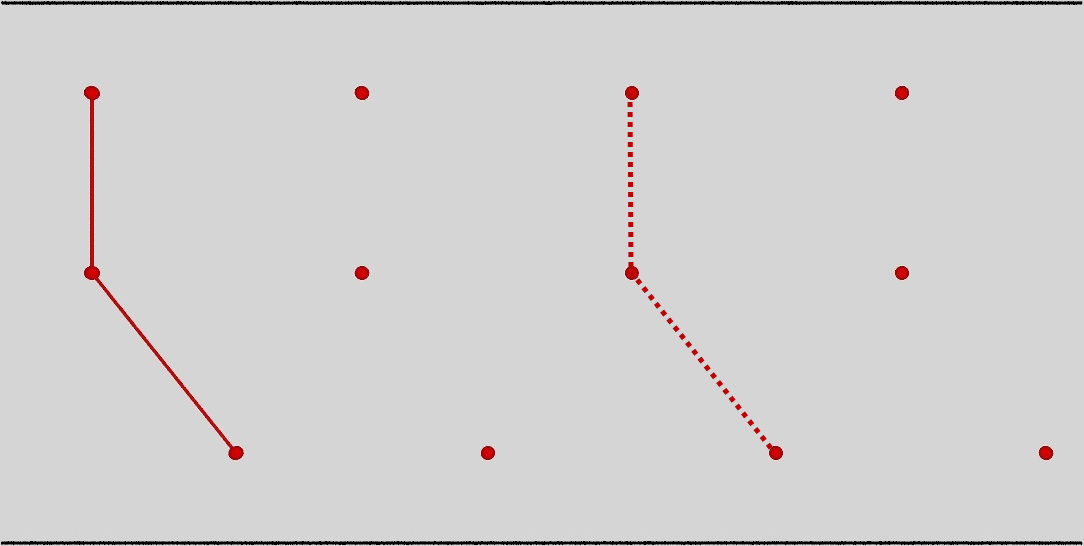}
	\lput(97,44){$\tilde M$}
	\lput(7,38){$\tilde x_0$}
	\lput(7,25){$\tilde x_1$}
	\put(24,7){$\tilde x_2$}
	\put(15,18){$\tilde c(t)$}
	\put(23,13){$\stackrel{\phi}{\hbox to 0.2\textwidth{\rightarrowfill}}$}
	\put(50,0){
		\lput(7,38){$\tilde x_0'$}
		\lput(7,25){$\tilde x_1'$}
		\put(24,7){$\tilde x_2'$}
	}
	\end{overpic}}
\cput(75,0){\begin{overpic}[width=.48\textwidth]{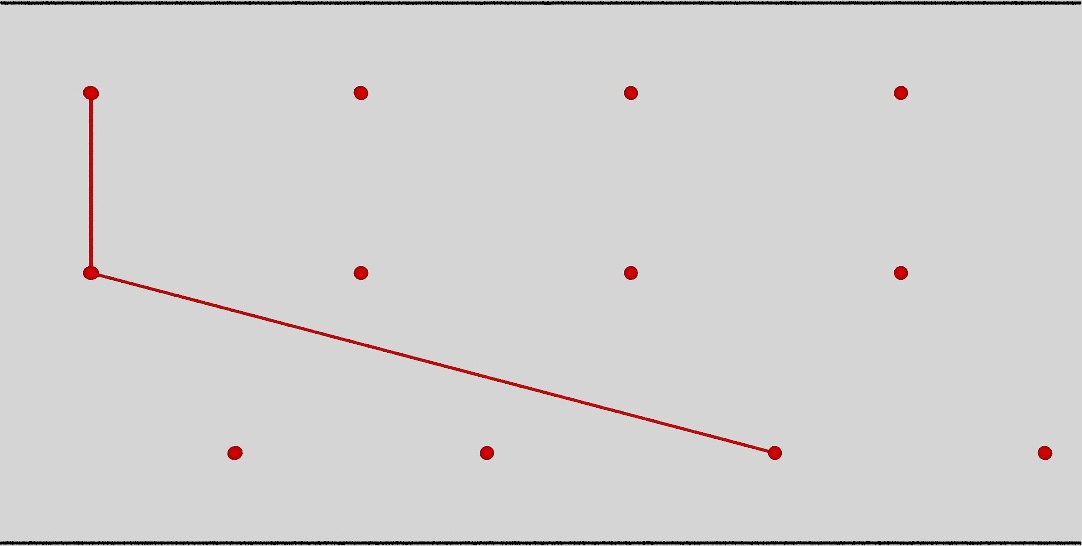}
	\lput(97,44){$\tilde M$}
	\lput(7,38){$\tilde y_0$}
	\lput(7,25){$\tilde y_1$}
	\put(74,7){$\tilde y_2$}
	\end{overpic}}
\end{picture}}

\caption{Top: initial data on a cylinder $M=S^1\times[0,1]$
together with connecting paths. Bottom: their lift to the universal
covering $\tilde M$, which is the strip $(-\infty,\infty)\times[0,1]$.
The various possible liftings are mapped onto each other by a
deck transformation $\phi$.}
\label{cylinder}
\end{figure}

\subsubsection*{Re-definition of the Riemannian analogue of a linear
scheme}

So far our initial data always consisted of a sequence of points in \(M\). Now we additionally choose a path \(c(t)\) which connects the data points \(x_j\) in the correct order: we have \(c(t_j)=x_j\) for suitable parameter values \(\ldots < t_j < t_{j+1} < \ldots\)\ . Such a path is not unique, see Figure \ref{cylinder}. By well-known properties of the simply connected covering, this path can be uniquely lifted to a path \(\tilde{c}(t)\) in \(\tilde{M}\) which projects onto the original path \(c(t)\), once a preimage \(\tilde{x}_0\) with \(\pi(\tilde{x}_0)=x_0\) has been chosen. This means that for all indices \(j\) we have 
\begin{align*}
\tilde{c}(t_j)=\tilde{x}_j,\quad\text{with}\quad\pi(\tilde{x}_j)=x_j.
\end{align*}

We can now simply apply the Riemannian analogue \(\tilde{T}\) of the linear scheme \(S\) which operates on data from \(\tilde{M}\), because \(\tilde{M}\) is \CH\ by construction. Note that there is no Riemannian analogue of \(S\) in \(M\), since \(M\) is not simply connected and geodesic averages are not well-defined in general. However if our input data is a  sequence \(x_j\) together with a connecting path as described above, we may let 
\begin{align*}
Tx=\pi(\tilde{T}\tilde{x}) \quad \text{where \(\tilde{x}\) arrises from \(x\) by lifting}. 
\end{align*}
We can still call \(T\) a natural Riemannian analogue of the linear subdivision scheme \(S\). 

\begin{lem} For any given input data $(x_j)$, the refined data
$(Tx)_j$ computed by the Riemannian analogue $T$ of a linear subdivision
scheme $S$ depends only on the homotopy class of the path $c(t)$ which
is used to connect the data points.
\end{lem}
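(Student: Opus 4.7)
The plan is to combine the homotopy lifting property of the universal covering $\pi\colon\tilde M\to M$ with the discreteness of the fibres $\pi^{-1}(x_j)$. First I would fix two paths $c_0,c_1\colon[a,b]\to M$ that both pass through the given data in order, i.e., $c_0(t_j)=c_1(t_j)=x_j$, and that are homotopic rel.\ the data points via some continuous $H\colon[0,1]\times[a,b]\to M$ with $H(0,\cdot)=c_0$, $H(1,\cdot)=c_1$ and $H(s,t_j)=x_j$ for all $s,j$. Lifting $c_0$ and $c_1$ from a common preimage $\tilde x_0\in\pi^{-1}(x_0)$ yields lifted sequences $(\tilde x_j^0)$ and $(\tilde x_j^1)$ in $\tilde M$, and the lemma reduces to showing $\tilde x_j^0=\tilde x_j^1$ for every $j$: once this is established, $\tilde T\tilde x^0=\tilde T\tilde x^1$ by well-definedness of the Riemannian analogue in the \CH\ manifold $\tilde M$ (Theorem~\ref{intrinsicmean}), and hence $Tx=\pi(\tilde T\tilde x)$ coincides in both cases.

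Next I would invoke the homotopy lifting property to produce the unique lift $\tilde H\colon[0,1]\times[a,b]\to\tilde M$ with $\tilde H(0,\cdot)=\tilde c_0$. Because $H(\cdot,a)$ is the constant path at $x_0$, its lift $\tilde H(\cdot,a)$ is the constant path at $\tilde x_0$; by uniqueness of path lifts it then follows that $\tilde H(1,\cdot)=\tilde c_1$. For each fixed $j$, the map $s\mapsto\tilde H(s,t_j)$ is a continuous curve in $\tilde M$ whose values all lie in $\pi^{-1}(x_j)$, which is discrete. A continuous map from $[0,1]$ into a discrete set is constant, giving $\tilde x_j^0=\tilde H(0,t_j)=\tilde H(1,t_j)=\tilde x_j^1$, as required.

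The main point to watch is the interpretation of the phrase \emph{homotopy class}: the homotopy must keep every data point $x_j$ pinned at parameter $t_j$, since otherwise $s\mapsto \tilde H(s,t_j)$ would not be trapped in a single discrete fibre and the argument would collapse. This is the natural equivalence relation on connecting paths in the present setup. I would round off with the observation that the whole construction is also insensitive to the choice of $\tilde x_0$: any other choice differs by a deck transformation $\phi$, which is an isometry of $\tilde M$ and therefore commutes with the Riemannian centre of mass, so $\tilde T(\phi\tilde x)=\phi(\tilde T\tilde x)$; applying $\pi$ and using $\pi\circ\phi=\pi$ eliminates $\phi$ upon projection back to $M$. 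No further difficulties are expected.
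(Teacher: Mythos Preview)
Your proof is correct and follows essentially the same two-step approach as the paper: independence of the choice of basepoint $\tilde x_0$ via deck transformations, plus the fact that the lifted data $\tilde x_j$ depend only on the homotopy class of $c$. The paper simply cites the latter as well known (referring to \cite{hatcher}), whereas you spell it out via the homotopy lifting property and, usefully, make explicit that the relevant homotopy must fix all the data points $x_j$---a point the paper leaves implicit.
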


\begin{proof} First we show that \(Tx\) does not depend on the choice of
the preimage \(\tilde{x}_0\) in the covering space $\tilde M$:
if another preimage \(\tilde{x}'_0\) is chosen,
there is an isometric \textit{deck transformation} \(\phi:\tilde M
\to\tilde M\) which maps the
original lifting to the new one and which commutes with the covering
projection \(\pi\).  The action of \(\tilde{T}\) is invariant under
isometries, so \(\pi(\tilde{T}\tilde{x}')=\pi(\tilde{T}\phi(\tilde{x}))
=\pi(\phi(\tilde{T}\tilde{x}))=\pi(\tilde{T}\tilde{x})\).
Further, it is well known that the lifted location $\tilde x_j$ of
any individual data point $x_j$ depends only on the homotopy class of
the path $c$, cf.\ \cite{hatcher}.
\end{proof} 

With this modification of the notion of input data, our main result Theorem \ref{derivedscheme} now reads as follows:

\begin{thm}
Let \(M\) be a complete manifold with \(K\leqslant0\), and let \(S\) be a linear, affine invariant subdivision rule with dilation factor \(N\). Denote by \(S^{*}\) its derived scheme. If there exists an integer \(m\geqslant 1\) such that \(\frac1{N^m}\Vert S^{m*} \Vert <1\), then the Riemannian analogue \(T\) of \(S\) in \(M\) produces continuous limits for all input data.\end{thm}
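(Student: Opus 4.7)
The plan is to lift the problem to the universal covering $\tilde M$, apply Theorem~\ref{derivedscheme} there, and then project the limit back to $M$. The key observation is that $\tilde M$ is simply connected, complete, and has sectional curvature $K\leqslant 0$, hence is a \CH\ manifold. Consequently, Theorem~\ref{intrinsicmean} ensures that the Riemannian analogue $\tilde T$ of $S$ is globally well-defined on arbitrary sequences in $\tilde M$, and Theorem~\ref{derivedscheme} yields a continuous limit function $\tilde T^{\infty}\tilde x\colon\R\to\tilde M$ under the hypothesis $\frac1{N^m}\Vert S^{m*}\Vert<1$.

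First I would set up the lifted iteration in detail. Given input data $(x_j)$ and a connecting path $c$ in $M$, let $(\tilde x_j)$ and $\tilde c$ be the lifts to $\tilde M$, and observe that $(\tilde T\tilde x)_j$ projects to $(Tx)_j$ by the very definition of $T$ given in the text. To iterate $T$ in $M$ in a well-defined manner, I would take as connecting path for $(Tx)_j$ the projection $\pi\circ\tilde c_1$ of the broken geodesic $\tilde c_1$ through the points $(\tilde T\tilde x)_j$ in $\tilde M$. Since $\pi$ is a local isometry it sends geodesics to geodesics, and by uniqueness of path lifting (starting from the fixed base point $\tilde x_0$) the lift of this projected path is $\tilde c_1$ itself. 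Hence the lifted second iterate equals $\tilde T^2\tilde x$, and inductively the broken geodesics $c_k$ built from $T^k x$ satisfy $c_k=\pi\circ\tilde c_k$.

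Next I would invoke the standard fact that a Riemannian covering projection between complete manifolds is distance non-increasing, i.e., $\dist_M(\pi(p),\pi(q))\leqslant\dist_{\tilde M}(p,q)$ for all $p,q\in\tilde M$. Combined with the uniform convergence of $\tilde c_k$ to $\tilde T^{\infty}\tilde x$ on each compact interval $J=[a,b]$ (which is exactly what the proof of Theorem~\ref{convergence}/\ref{convergencearbirtraryN} delivers in $\tilde M$), this yields
\begin{align*}
\max_{t\in J}\dist_M\!\big(c_k(t),\pi(\tilde T^{\infty}\tilde x(t))\big)
\leqslant
\max_{t\in J}\dist_{\tilde M}\!\big(\tilde c_k(t),\tilde T^{\infty}\tilde x(t)\big)\longrightarrow 0.
\end{align*}
Therefore $c_k$ converges uniformly on $J$ to the continuous function $T^{\infty}x:=\pi\circ\tilde T^{\infty}\tilde x$, which establishes the claim.

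The main obstacle is bookkeeping rather than analysis: one has to verify that the iteration in $M$ is consistent, in the sense that applying $T$ repeatedly in $M$ (which requires a connecting path at every step) corresponds exactly to applying $\tilde T$ repeatedly in $\tilde M$. This reduces to the uniqueness of path lifting and the commutation of $\tilde T$ with deck transformations, both already used in the lemma preceding the statement. Once that is settled, the conclusion is immediate from the \CH\ case together with the distance-non-increasing property of $\pi$.
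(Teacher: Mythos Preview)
Your proposal is correct and follows exactly the approach the paper intends: the paper does not give a separate proof of this theorem, treating it as an immediate restatement of Theorem~\ref{derivedscheme} once the Riemannian analogue in $M$ has been \emph{defined} via lifting to the universal covering $\tilde M$ and projecting back. Your write-up simply spells out the details the paper leaves implicit --- in particular the inductive identification $c_k=\pi\circ\tilde c_k$ and the use of the distance-nonincreasing property of the covering projection to transfer uniform convergence from $\tilde M$ to $M$.
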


\section{Examples}
\subsection{Four-point scheme} \label{examplefourpointscheme}
Consider the general four-point scheme \(S\) introduced in (\ref{four-point}). We would like to know for which values of \(\omega \in (0,\infty)\) the Riemannian analogue \(T\) of \(S\) converges. The mask of the derived scheme is given by \(a^*_{-2}=a^*_{3}=-2\omega\), \(a^*_{-1}=a^*_{2}=2\omega\) and \(a^*_0=a^*_1=1\). Thus by Theorem \ref{convergence} the contractivity factor is \(\gamma=2|\omega|+\frac1{2}\) and \(T\) converges for arbitrary input data if \(-\frac1{4}<\omega< \frac1{4}\). For \(-\frac1{2}<\omega\leqslant 0\) this has already been known \cite{ebner1}, \cite{ebner2}. In this case the mask is nonnegative. 

In particular, we obtain a contractivity factor of 
\(\gamma=\frac{5}{8}\) for the well-studied case of the four-point scheme with \(\omega=\frac1{16}\). By Proposition \ref{Höldercontinuity} we obtain a H\"older exponent of \(\iota\approx 0.6781\). See also Figure \ref{vierpunkt_plot}. 

\begin{figure}[t]
\centering
\includegraphics[scale=1.3]{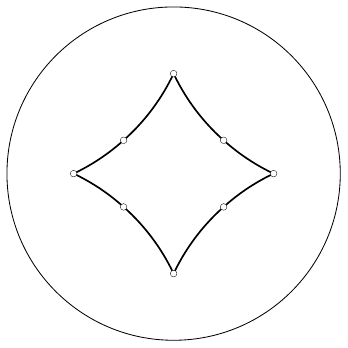}
\hspace*{0.1cm}
\includegraphics[scale=1.3]{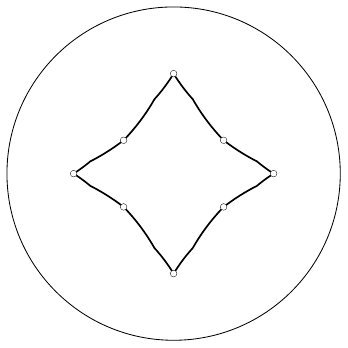}
\hspace*{0.1cm}
\includegraphics[scale=1.3]{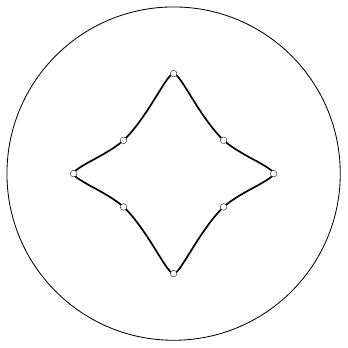}
\caption{Four-point scheme with \(\omega=\frac1{16}\) and initial data \(x_0=(0, 0.6)\), \(x_1=(0.3, 0.2)\), \(x_2=(0.6, 0)\), \(x_3=(0.3, -0.2)\), \(x_4=(0, -0.6)\), \(x_5=(-0.3, -0.2)\), \(x_6=(-0.6 ,0)\) and  \(x_7=(-0.3 ,0.2)\) in the hyperbolic plane represented with the Poincar\'{e} disk model. From left to right: initial polygon, polygon after one refinement step, polygon after 4 refinement steps.}
\label{vierpunkt_plot}
\end{figure}
   
Now we consider two rounds of the four-point scheme  as one round of a subdivision scheme with dilation factor \(N=4\) which for simplicity is again called \(S\). If  \(\omega=\frac1{16}\), our refinement rule is then given by 
\begin{align*}
	\begin{array}{l@{\,\,}*9{@{\,}c@{\,\,}r@{\,}l@{\,}}}
	(Sx)_{4i}
	&=&&&&&&&&x_{i},\\[0.5ex]
	(Sx)_{4i+1}
		&=&\frac1{16^2} \big(& x_{i-2}
		&-&{18} & x_{i-1} 
		&+&{216} & x_{i}
		&+&{66} & x_{i+1}
		&-&{9} & x_{i+2}\big), \\[0.5ex]
	(Sx)_{4i+2}
		&=&\frac1{16^2}\big(&&-&{16} & x_{i-1}
		&+&{144} & x_{i} 
		&+&{144} & x_{i+1}
		&-&{16} & x_{i+2}\big), \\[0.5ex]
	(Sx)_{4i+3} 
		&=&\frac1{16^2}\big(&&-&{9} & x_{i-1}
		&+&{66} & x_{i} 
		&+&{216} & x_{i+1}
		&-&{18} & x_{i+2}
		&+& & x_{i+3}\big). 
	\end{array}
\end{align*}
The contractivity factor is 
\begin{align*}
\gamma=\max \Big\{ \frac{84}{16^2},\frac{80}{16^2} \Big\} = \frac{84}{16^2} \approx 0.3281.
\end{align*}
Theorem \ref{convergencearbirtraryN} again confirms that the Riemannian analogue \(T\) converges to a continuous limit function for all input data.
Proposition \ref{HoelderarbitraryN} yields a H\"older exponent of
\(\iota\approx 0.8039\).
   
\subsection*{Acknowledgements}
The authors acknowledge the support of the Austrian Science Fund (FWF):
This research was supported by the doctoral program {\it Discrete
Mathematics} (grant no.\ W1230) and by the SFB-Transregio 
programme {\it Discretization in geometry and dynamics} (grant no.\ 
I705).

\providecommand{\bysame}{\leavevmode\hbox to3em{\hrulefill}\thinspace}
\providecommand{\MR}{\relax\ifhmode\unskip\space\fi MR }
\providecommand{\MRhref}[2]{%
  \href{http://www.ams.org/mathscinet-getitem?mr=#1}{#2}
}
\providecommand{\href}[2]{#2}

\end{document}